\author{J. C. Meyer}
\title{A note on boundary point principles for partial differential inequalities of elliptic type}
\newtheorem{thm}{Theorem}[section]
\newtheorem{lem}[thm]{Lemma}
\theoremstyle{definition}
\newtheorem{defn}[thm]{Definition}
\theoremstyle{definition}
\newtheorem{remk}[thm]{Remark}
\theoremstyle{definition}
\newtheorem{ex}[thm]{Example}
\theoremstyle{definition}
\numberwithin{equation}{section}
\begin{document}
\maketitle

\begin{abstract}
In this note we consider boundary point principles for partial differential inequalities of elliptic type.
Firstly, we highlight the difference between conditions required to establish classical strong maximum principles and classical boundary point lemmas for second order linear elliptic partial differential inequalities.
We highlight this difference by introducing a singular set in the domain where the coefficients of the partial differential inequality need not be defined, and in a neighborhood of which, can blow-up.  
Secondly, as a consequence, we establish a comparison-type boundary point lemma for classical elliptic solutions to quasi-linear partial differential inequalities.
Thirdly, we consider tangency principles, for $C^1$ elliptic weak solutions to quasi-linear divergence structure partial differential inequalities.
We highlight the necessity of certain hypotheses in the aforementioned results via simple examples.
\end{abstract}

\section{Introduction}
In this note we consider boundary point principles (BPP) for solutions to elliptic partial differential inequalities (PDI).
Specifically, we first give a relaxation of Hopf's \cite{EH1} classical strong maximum principle (CSMP) for classical solutions to linear elliptic PDI, which here, allows the coefficients in the PDI to be unbounded in a neighborhood of a sufficiently regular subset of the spatial domain. 
The boundary point lemma (BPL) for linear elliptic PDI is obtained a consequence of this CSMP and complements available results (see \cite{GL1} and \cite{PPJS1}).  
Although coefficients in the PDI in the BPL are not necessarily bounded, they are constrained by growth conditions detailed in Section 2. 
As a secondary consideration, we illustrate how to extend BPL for classical solutions to linear elliptic PDI, to comparison-type BPL for elliptic classical solutions of quasi-linear PDI, and highlight the importance of specific conditions on this extension via examples.
Consequently, we demonstrate that the BPL, as stated in \cite[Theorem 2.7.1]{PPJS1} is erroneous.
Thirdly, we give an extension of a tangency principle for $C^1$ elliptic weak solutions to divergence structure quasi-linear PDI in domains with boundaries that satisfy an interior cone condition, which appeared in \cite{JS1}.
We also highlight that the tangency principle in \cite[Theorem 2.7.2]{PPJS1}, an extension of that in \cite{JS1}, is erroneous.
Corrections to the aforementioned erroneous theorem statements are provided.

We now give a brief account of the historical development of results in this note. 
The CSMP and BPL for classical solutions to linear elliptic PDI were established by Hopf in \cite{EH1} for linear elliptic PDI with bounded (uniformly elliptic) coefficients.
Although Hopf considered generalizations of the CSMP and BPL to elliptic solutions of nonlinear PDI in \cite{EH1}, more general statements of these results were established by McNabb in \cite{AM1}.
Extensions to the CSMP and BPL for classical solutions to linear elliptic PDI with coefficients that can blow-up or degenerate have been considered by numerous authors, as summarised in \cite{AN1}, \cite{GL1} and \cite{PPJS2}.
Additionally, due to the the development of a theory for weak solutions to boundary value problems for divergence structure quasi-linear elliptic PDE, tangency principles for $C^1$ elliptic weak solutions to quasi-linear PDI were established by Serrin in \cite{JS1}, and extended in \cite{PPJS1}.   
We note that the proof of Serrin relies on an iteration method developed by Moser \cite{JM1} and a Harnack inequality for quasi-linear divergence structure elliptic PDI established by Trudinger \cite{NT1}.
More recently tangency principles have been established for $C^1$ elliptic weak solutions to quasi-linear PDI which have conclusions more similar to that of Hopf type BPL (see \cite{LR2}, \cite{LR1} and \cite{SDL1}).
A broader historical overview of the development of this theory can be found in \cite[p.156-158 and p.193-194]{PW1}, \cite[p.46]{PPJS1}, \cite{AIN1}, \cite{ABMMZ1} and \cite{PPVR1}. 

The remainder of the note is presented as follows.
In Section 2, we prove the CSMP and BPL for classical solutions to linear elliptic PDI, and consequently, we establish a comparison-type BPL for elliptic classical solutions to quasi-linear PDI.
Furthermore, we provide examples which highlight the need for specific conditions given in the statement of the BPL as given here, one of which, is a counter-examples to \cite[Theorem 2.7.1]{PPJS1}.
In Section 3, we establish a comparison-type tangency principle for $C^1$ elliptic weak solutions to quasi-linear divergence structure PDI in domains which satisfy an interior cone condition at boundary points.
The necessity of several conditions in the BPL statement are highlighted, and furthermore, we demonstrate that \cite[Theorems 2.7.2 and 2.7.3]{PPJS1}, are erroneous.
In Section 4, we discuss how results in this note can be generalised and placed in a wider context.

\section{Classical Theory}

In this section, we establish a CSMP in Theorem \ref{CSMP} and BPL in Theorem \ref{BPL1} for classical solutions to linear elliptic PDI.
The CSMP is noteworthy in that it allows coefficients in the PDI, under constraint, to blow-up in the interior of the domain in the neighborhood of a singular set.
After defining the regularity of the singular set and constructing a suitable auxiliary function, the proofs of these results largely follow the description of related proofs available in \cite[Chapter 2]{PPJS1}.
This allows us to highlight a distinction between the conditions required to establish a CSMP and BPL for classical solutions to linear elliptic PDI.
Consequently, we also establish a comparison type BPL for classical elliptic solutions to quasi-linear PDI in Theorem \ref{t2} using the aforementioned BPL for linear elliptic PDI, refining an analogous statement in \cite[Theorem 2.7.1]{PPJS1}.
We provide a proof using the approach outlined in \cite[Section 2.7]{PPJS1} where it is noteworthy that a full proof is omitted. 
To conclude the section, we give a simple counter-example to \cite[Theorem 2.7.1]{PPJS1} and provide a further example to highlight the importance of specific conditions in Theorem \ref{t2} which are not present in \cite[Theorem 2.7.1]{PPJS1}.

\subsection{Notation and Definitions}
For a set $X\subset \mathbb{R}^n$, we denote $\partial X=\bar{X} \setminus \text{int}(X)$, to be the boundary of $X$. 
In addition, throughout this note, $\Omega\subset\mathbb{R}^n$ denotes an open connected bounded set (a bounded domain), and we denote the set $B_R(x_0)\subset\mathbb{R}^n$ to be an open n-dimensional ball of radius $R$ (with respect to the Euclidean distance) centred at $x_0\in\mathbb{R}^n$.
Furthermore, we denote $R(X)$ to be the set of real-valued functions with domain $X$, $C(X)\subset R(X)$ to be the set of all continuous functions in $R(X)$ and $C^i(X)\subset C(X)$ to be the set of $i$-times continuously differentiable functions in $C(X)$ for each $i\in\mathbb{N}$.  
Additionally, for $u\in C^2(\Omega )$ and $\mathcal{S}\subset \Omega$, we consider the linear elliptic operator $L:C^2(\Omega )\to R(\Omega \setminus \mathcal{S} )$ given by
\begin{equation} \label{nc1} 
L[u]:= \sum_{i,j=1}^n a_{ij}u_{x_ix_j} + \sum_{i=1}^n b_iu_{x_i} + cu \ \ \ \text{in } \Omega \setminus \mathcal{S} , 
\end{equation} 
with $a_{ij},b_i,c:\Omega \setminus \mathcal{S} \to\mathbb{R}$ prescribed functions for $i,j=1,\dots ,n$, and such that there exists a non-negative function $\Lambda :\Omega \setminus \mathcal{S}\to\mathbb{R}$ for which,
\begin{equation} \label{nc2} 
|y|^2 \leq \sum_{i,j=1}^n a_{ij}(x)y_iy_j \leq \Lambda (x)|y|^2 \ \ \ \forall x\in\Omega \setminus \mathcal{S} , \ y\in\mathbb{R}^n . 
\end{equation}
We refer to the set $\mathcal{S}$ where the linear elliptic operator is not defined for $u$, as the {\emph{singular set}}.
Additionally, note that by re-scaling the coefficients in the operator in \eqref{nc1} by $\epsilon$, the left hand side of \eqref{nc2} can be expressed as $\epsilon |y|^2$ i.e. with an equivalent frequently used ellipticity condition.
Moreover, for $u\in C^2(\Omega )$ we denote $Du$ and $D^2u$ to be the gradient of $u$ and the Hessian of $u$ on $\Omega$, respectively.

To establish the CSMP in this note, we give the following definition, which will be used to define the structure of the singular set $\mathcal{S}\subset \Omega$.
We refer to $\mathcal{S}$ as the singular set since the coefficients $a_{ij}$, $b_i$ or $c$ of $L$ are allowed, with constraint, to blow up in neighborhoods of $\mathcal{S}$.   
We note that in \cite{ABMMZ1}, alternatively, two-sided `hour glass' conditions are employed for regularity conditions on singular sets which complement the following definition.

\begin{defn} \label{D1}
Let $\Omega\subset\mathbb{R}^n$ be a domain and $S\subset \Omega$. We say that $\mathcal{S}$ satisfies an {\emph{outward ball property}} if, given any nonempty relatively closed set $\mathcal{T}\subset \Omega$ that is a strict subset of $\Omega$, there exists $R>0$ and $x_0\in \Omega \setminus (\mathcal{T}\cup \mathcal{S})$ such that
\begin{equation} \label{D1a} 
B_R(x_0)\subset \Omega \setminus (\mathcal{T}\cup \mathcal{S})\ \text{ and }\ \partial B_R(x_0) \cap \mathcal{T} \not= \emptyset . 
\end{equation}
\end{defn}

To illustrate some geometric aspects of sets that satisfy an outward ball property, consider the following:

\begin{itemize}
\item[(i)] If $\mathcal{S}$ consists solely of a finite number of points in $\Omega$ then $\mathcal{S}$ satisfies the outward ball property.
This follows by considering $d_{H'}:\mathcal{P}(\mathbb{R}^n)\times \mathcal{P}(\mathbb{R}^n)\to [0,\infty )$ with $\mathcal{P}(X)$ denoting the power set of $X$, and
\[ 
d_{H'}(X,Y) = \sup_{x\in X} \left( \inf_{y\in Y} |x-y| \right) \ \ \ \forall X,Y\in \mathcal{P}(\mathbb{R}^n), 
\]
i.e. one component of the Euclidean Hausdorff distance between $X$ and $Y$. 
Note that if $|X|=1$, then $d_{H'}$ is the Euclidean Hausdorff distance between the two sets $X$ and $Y$, denoted here by $d(X,Y)$. 
Now, let $\mathcal{T}$ be as in Definition \ref{D1}. 
Then since $\mathcal{T}$ is nonempty and $\mathcal{T}\not=\Omega$, it follows that $\partial \mathcal{T}\cap \Omega \not=\emptyset$. 
If $d_{H'}(\partial \mathcal{T} \cap \Omega , \mathcal{S})=0$, it follows that $\mathcal{T} \subseteq  \mathcal{S}$, and we can choose a point $x_0\in\Omega \setminus ( \mathcal{T}\cup \mathcal{S})$ sufficiently close to $\mathcal{T}$ such that there exists a ball $B_R(x_0)$ that satisfies \eqref{D1a}. 
Alternatively, if $d_{H'}(\partial \mathcal{T} \cap \Omega ,  \mathcal{S})>0$, then since $\Omega \setminus (\mathcal{T}\cup  \mathcal{S})$ is a nonempty open set, we can chose $x_0\in \Omega \setminus (\mathcal{T}\cup  \mathcal{S})$ so that $d_{H'}(\{ x_0\} ,\mathcal{T})<\tfrac{1}{2} d_{H'}(\{ x_0\} , \mathcal{S}\cup \partial \Omega )$.
Thus, there exists a ball $B_R(x_0)$ that satisfies \eqref{D1a}. 
\item[(ii)] If $\Omega =(-1,1)^2\subset \mathbb{R}^2$ and 
\begin{align*}
\mathcal{S}=&\{ (x_1,x_2)\in \Omega :\ (x_1,x_2) = (\phi_1(t),\phi_2(t)) \ \ \forall t\in (0,1)\text{ with } \phi : (0,1)\to\Omega \text{ twice } \\ 
&\  \text{continuously differentiable and injective on }(0,1) \text{ with }\lim_{t\to 0}\phi (t)=\phi_0 \not= \phi_1 = \lim_{t\to 1} \phi(t) \} ,
\end{align*}
then $\mathcal{S}$ satisfies the outward ball property. 
To see this, let $\mathcal{T}$ be as in Definition \ref{D1}. 
If $d_{H'}(\partial T \cap \Omega , \mathcal{S})>0$, then a ball that satisfies \eqref{D1a} is guaranteed to exist, following the justification in (i). 
Alternatively, if $d_{H'}(\partial T \cap \Omega, \mathcal{S})=0$, then it follows that $\partial T \cap \Omega \subseteq \overline{\mathcal{S}}$. 
Suppose $\partial \mathcal{T}\cap \mathcal{S}\supset \{ s_0\}$.
Then since $\mathcal{S}$ is given by a sufficiently smooth curve, for $s_0$, there exists a ball $B_R(s_0)\subset \Omega$ such that
\[ 
\mathcal{S} \cap  \bar{B}_R(s_0) = \{ (\phi_1(t),\phi_2(t)):t_1 \leq t \leq t_2 \} =: \mathcal{S}_R \cup \{ (\phi_1(t_1),\phi_2(t_1)),\ (\phi_1(t_2),\phi_2(t_2)) \} .
\]
Thus, $\partial \mathcal{T}\cap B_R (s_0) \subset \mathcal{S}_R$ and hence, via the Jordan Curve Theorem, $B_R(s_0)$ can be decomposed into the disjoint sets $\mathcal{S}_R$, $B_R^{1}(s_0)$ and $B_R^{2}(s_0)$ with $B_R^{1}(s_0)$ the connected open set with boundary $\mathcal{S}_R$ and the arc on $\partial B_R(s_0)$ connecting $(\phi_1(t_1),\phi_2(t_1))$ to $(\phi_1(t_2),\phi_2(t_2))$ in a clockwise direction ($B_R^{2}(s_0)$ is defined similarly to $B_R^{1}(s_0)$ with clockwise replaced by anti-clockwise). 
Thus, $\mathcal{T} \cap B_R(s_0)$ is either: $\mathcal{S}_R\cap \mathcal{T}$, $\mathcal{S}_R\cup B_R^{1}(s_0)$ or $\mathcal{S}_R \cup B_R^{2}(s_0)$, and in each case, since $\mathcal{S}_R$ is defined by a $C^2$ curve, there exists a ball $B_{R_1}(x)\subset B_R(s_0) \setminus (\mathcal{T}\cup \mathcal{S})$ that satisfies \eqref{D1a}.
If instead  $d_{H'}(\partial \mathcal{T} \cap \Omega, \mathcal{S})=0$ and $\partial \mathcal{T}\cap \mathcal{S} = \emptyset$ then a similar argument to that in (i) can be used to demonstrate that a ball that satisfies \eqref{D1a} exists.
It follows analogously from the Jordan-Brouwer Separation Theorem that any set of finitely many disjoint compact $(n-1)-$dimensional sufficiently smooth $C^2$ manifolds in a domain $\Omega\subset\mathbb{R}^n$ also satisfies the outward ball property. 
\item[(iii)] If $\Omega = (-1,1)^2$ and 
\[ 
\mathcal{S}' = \{ (x_1,x_2)\in \Omega :\ x_1=0 \text{ or }x_2=0\}, 
\]
then $\mathcal{S}'$ does not satisfy the outward ball property. 
This follows by considering  $\mathcal{T}=\{ (0,0) \}$ and observing that every ball $B_R(x)\subset \Omega$ such that $\partial B_R(x)\cap \mathcal{T} \not = \emptyset$, also satisfies $B_R(x)\cap \mathcal{S}' \not=\emptyset$. 
However, if  instead $\Omega = (-1,1)^2$ and 
\[ 
\mathcal{S} = \{ (x_1,x_2)\in [0,1)\times (-1,1) :\ x_1=0 \text{ or }x_2=0\} 
\]
then $\mathcal{S}$ satisfies the outward ball property.
\item[(iv)] If $\mathcal{S}'$ is locally dense on $B_R(x)\subset \Omega$ then $\mathcal{S}'$ does not satisfy the outward ball property.
This can be observed by choosing $\mathcal{T}$ to contain any point in $\mathcal{S}'\cap B_R(x)$. 
Consequently, sets that satisfy Definition \ref{D1} are necessarily measure zero sets with respect to the Lebesgue measure.
\item[(v)] If $\mathcal{S}$ satisfies Definition \ref{D1}, then $\mathcal{S}$ is $1$-porous at each $s\in\mathcal{S}$ with respect to \cite[Definition 2.1]{Z1}.
This follows by considering $\mathcal{T}=\{ s\}$. 
However, not all subsets of $\Omega$ that are $1$-porous at every point necessarily satisfy Definition \ref{D1}.
For example, consider $\Omega = (-1,1)^2$ with 
\[ 
\mathcal{S}' = \left\{ (x_1,x_2)\in \Omega :\ x_1 =\tfrac{1}{2n} \text{ for } n\in\mathbb{N} \right\} .
\]
Since $\mathcal{S}'$ consists of a countable set of isolated lines, it follows immediately that $\mathcal{S}'$ is $1$-porous at each $s\in \mathcal{S}'$.
However by considering $\mathcal{T}= (-1,0]\times (-1,1)\subset \Omega$, it follows that $\mathcal{S}'$ does not satisfy the outward ball property.
It is noteworthy that the review articles \cite{Z2} and \cite{Z1} do not indicate that a link has been established between porous sets and singular sets for elliptic PDI.
\end{itemize}

Later in this section, for $u\in C^2(\Omega )$, we consider the quasi-linear operator $Q:C^2(\Omega )\to R(\Omega )$ given by,
\begin{equation} \label{pi1''} 
Q[u]:= \sum_{i,j=1}^n A_{ij}(x,u,Du)u_{x_ix_j} + B(x,u,Du) \ \ \ \text{in } \Omega ,
\end{equation} 
with $A_{ij},B:\Omega\times \mathbb{R}\times\mathbb{R}^n \to\mathbb{R}$ prescribed functions.  
Specifically, we refer to $Q$ as elliptic with respect to a specific $u\in C^2(\Omega )$ if \eqref{nc2} holds for $a_{ij}(x) = A_{ij}(x,u(x),Du(x))$ for all $x\in\Omega$. 

\subsection{CSMP and BPL for linear elliptic PDI}
Before, we establish a CSMP and BPL for classical solutions to $L[u]\geq 0$ with $L$ given by \eqref{nc1}, we give the following lemma which guarantees the existence of a suitable {\emph{comparison function}}. 

\begin{lem} \label{lem1}
Let $R,m>0$ be constants and set $k=2n\left( \tfrac{2}{R} + 1\right) + 3$. 
Additionally, suppose that there exists a constant $\epsilon >0$ and a continuous non-increasing function $\Lambda :(0,\epsilon ]\to (0,\infty )$ such that $\Lambda \in L^1((0,\epsilon ])$,  
\begin{equation} \label{lem1p3''} 
\epsilon \in \left( 0, \min \left\{ 1,\ \tfrac{R}{2} \right\}\right)\ \text{ and }\ \int_0^\epsilon \Lambda (s) ds < \frac{1}{k} ,
\end{equation}
and moreover, for $\Omega = B_R(O)\setminus \bar{B}_{R-\epsilon}(O)$ that
\begin{equation} \label{lem1H1} 
|y|^2 \leq \sum_{i,j=1}^n a_{ij}(x)y_iy_j \leq \Lambda (R-|x|)|y|^2 \ \ \ \forall x\in \Omega ,\ y\in \mathbb{R}^n ,
\end{equation}
\begin{equation} \label{lem1H2}  
|b_i(x)| \leq \Lambda (R-|x|) \ \ \ \ \forall x\in \Omega ,
\end{equation}
\begin{equation} \label{lem1H3}  
-c(x) \leq \frac{\Lambda (R-|x|)}{(R-|x|)} \ \ \ \ \forall x\in \Omega .
\end{equation}
Then, if $L$ is a linear elliptic operator with coefficients that satisfy \eqref{lem1H1}-\eqref{lem1H3}, there exists $v:\bar{\Omega} \to [0,m]$, such that:
\begin{itemize}
\item[(i)] $v=0$ on $\partial B_R(O)$, $v=m$ on $\partial B_{R-\epsilon}(O)$, and $v>0$ on $\Omega$.
\item[(ii)] $v\in C^1(\bar{\Omega}) \cap C^2(\Omega )$.
\item[(iii)] $L[v]>0$ on $\Omega$. 
\item[(iv)] $\partial_\nu v < 0$ on $\partial B_{R}(O)$ where $\partial_\nu v$ denotes the outward (to $\Omega$) directional derivative of $v$ normal to $\partial \Omega$.
\end{itemize}
\end{lem}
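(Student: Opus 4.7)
The plan is to build $v$ as a radial barrier $v(x) = \phi(R - |x|)$ for a carefully chosen $\phi: [0,\epsilon] \to [0,m]$ with $\phi(0) = 0$, $\phi(\epsilon) = m$, and $\phi$ increasing and convex. With this ansatz, conclusions (i), (ii) and (iv) reduce to demanding that $\phi > 0$ on $(0,\epsilon)$, that $\phi \in C^1([0,\epsilon])\cap C^2((0,\epsilon))$, and that $\phi'(0) > 0$, since the outward normal to $\Omega$ at $\partial B_R(O)$ is $x/|x|$ and $\partial_\nu v = -\phi'(0)$ there. All of the real work is in arranging (iii).

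To obtain (iii), I would compute $L[v]$ in polar form. Writing $r = |x|$ and $s = R - r$, and using $v_{x_i} = -\phi'(s) x_i/r$ together with the identity $v_{x_i x_j} = \phi''(s) x_i x_j/r^2 - \phi'(s)(\delta_{ij}/r - x_i x_j / r^3)$, the ellipticity \eqref{lem1H1} ($1 \leq (\sum a_{ij}x_ix_j)/r^2 \leq \Lambda$ and $\operatorname{tr}(a_{ij}) \leq n\Lambda$), together with \eqref{lem1H2} and \eqref{lem1H3}, yields (assuming $\phi'' \geq 0$, $\phi' \geq 0$ and $\phi \geq 0$) a pointwise bound of the form
\begin{equation*}
L[v] \;\geq\; \phi''(s) + \frac{\phi'(s)}{r} - \frac{n\Lambda(s)\,\phi'(s)}{r} - n\,\Lambda(s)\,\phi'(s) - \frac{\Lambda(s)\,\phi(s)}{s}.
\end{equation*}
Convexity of $\phi$ and $\phi(0)=0$ give $\phi(s)/s \leq \phi'(s)$, while $\epsilon < R/2$ gives $r \geq R/2$, hence $1/r \leq 2/R$ and $1/r \geq 1/R$. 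All the offending $\Lambda$ terms are therefore controlled by $C(n,R)\,\Lambda(s)\,\phi'(s)$ where $C(n,R)$ matches the constant $k$ in the statement.

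The natural way to absorb this is to impose $\phi''(s) = k\,\Lambda(s)\,\phi'(s)$, i.e.\ to take
\begin{equation*}
\phi(s) \;=\; \alpha \int_0^s \exp\!\left( k \int_0^t \Lambda(\tau)\,d\tau \right) dt ,
\end{equation*}
with $\alpha>0$ chosen so that $\phi(\epsilon) = m$. Then $\phi'(s) = \alpha \exp(k\int_0^s \Lambda)$ is positive, continuous on $[0,\epsilon]$ (since $\Lambda \in L^1$ makes the inner integral absolutely continuous), and $\phi''(s)=k\Lambda(s)\phi'(s)$ is continuous on $(0,\epsilon]$, giving (i), (ii) and (iv) together with positivity of $\phi'(0) = \alpha$. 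Substituting this into the bound above cancels all the $\Lambda$ terms and leaves $L[v] \geq \phi'(s)/R > 0$, which is (iii).

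The main obstacle is the constant bookkeeping: the bound on $\sum b_i v_{x_i}$ by $n\Lambda\phi'$ and on $-\phi'\operatorname{tr}(a_{ij})/r$ by $-(2n/R)\Lambda\phi'$, plus the absorbed $\Lambda\phi/s$ term, must be aggregated so that the coefficient in $\phi'' = k\Lambda\phi'$ is at least $2n(2/R+1)+3$; exactly this choice of $k$ forces the hypothesis $\int_0^\epsilon \Lambda < 1/k$ to play its role, since it guarantees that $\phi'(\epsilon)/\phi'(0) = \exp(k\int_0^\epsilon \Lambda)$ is a fixed finite number and hence that $\alpha = m/\int_0^\epsilon \exp(k\int_0^t \Lambda)\,dt$ is strictly positive and finite, completing the construction.
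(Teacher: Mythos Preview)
Your barrier works, but it is genuinely different from the paper's. The paper takes
\[
f(r)=r+k\int_0^r\!\!\int_0^s\Lambda(t)\,dt\,ds,\qquad \tilde v(x)=f(R-|x|),\qquad v=\frac{m}{f(\epsilon)}\tilde v,
\]
so that $f''=k\Lambda$ (not $k\Lambda f'$). With this choice the negative pieces of $L[\tilde v]$ contain the factor $f'=1+k\int_0^{R-|x|}\Lambda$, and the smallness hypothesis $\int_0^\epsilon\Lambda<1/k$ is used in an essential way to force $f'<2$ and $f<2(R-|x|)$; these bounds turn the negative terms into $-2n(2/R+1)\Lambda$ and $-2\Lambda$, which $f''=k\Lambda$ then beats by the explicit margin $\Lambda$.

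Your exponential barrier $\phi'(s)=\alpha\exp\!\big(k\int_0^s\Lambda\big)$ instead satisfies $\phi''=k\Lambda\phi'$, so the second derivative automatically dominates every term of size $\Lambda\phi'$; together with the convexity inequality $\phi(s)/s\le\phi'(s)$ this closes the estimate without ever needing a uniform bound on $\phi'$. That is a clean advantage, and it means your argument does \emph{not} actually use $\int_0^\epsilon\Lambda<1/k$: only $\Lambda\in L^1((0,\epsilon])$ is needed to make $\phi'$ continuous on $[0,\epsilon]$ and $\alpha$ finite. Your final paragraph misidentifies the role of that hypothesis (finiteness of $\exp(k\int_0^\epsilon\Lambda)$ and of $\alpha$ follow from integrability alone). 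So your approach in fact proves a slightly stronger statement than the paper's barrier does, and the specific value of $k$ is larger than you need; you should note this rather than claim the hypothesis is what makes $\alpha$ finite.
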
 

\begin{proof}
Define $f:[0,\epsilon ]\to [0,\infty )$ to be 
\begin{equation} \label{lem1p1} 
f(r) = r + k\int_0^r\hspace{-6pt}\int_0^s \Lambda (t) dt ds \ \ \ \forall r\in [0,\epsilon ] . 
\end{equation}
It follows immediately that 
\begin{equation} \label{lem1p2'} 
f\in C^1([0,\epsilon ])\cap C^2((0,\epsilon ]), 
\end{equation}
with
\begin{equation} \label{lem1p2} 
f'(r) = 1+ k\int_0^r \Lambda (t) dt \ \ \ \forall r\in [0,\epsilon ], 
\end{equation}
\begin{equation} \label{lem1p3} 
f''(r) = k\Lambda (r) \ \ \ \forall r\in (0,\epsilon ]. 
\end{equation}
Now, we define $\tilde{v}:\bar\Omega\to\mathbb{R}$ to be 
\begin{equation} \label{lem1p4} 
\tilde{v} (x) = f(R-|x|) \ \ \ \forall x\in \bar\Omega . 
\end{equation}
It follows from \eqref{lem1p1}-\eqref{lem1p4} that 
\begin{equation} \label{lem1p5'} 
\tilde{v}\in C^1(\bar{\Omega} )\cap C^2(\Omega ) , 
\end{equation}
\begin{equation} \label{v9.14'} 
\tilde{v} >0 \text{ on } \Omega  , 
\end{equation}
and 
\begin{align}
\nonumber L[\tilde{v}](x) = & \frac{\left( f''(R-|x|)|x|+ f'(R-|x|)\right)}{|x|^3}\sum_{i,j=1}^n a_{ij}(x)x_ix_j  \\
\label{lem1p7} & - \frac{f'(R-|x|)}{|x|} \sum_{i=1}^n \left( a_{ii}(x) + b_i(x)x_i \right) + f(R-|x|)c(x)
\end{align}
for all $x\in\Omega$. 
It now follows from substituting \eqref{lem1p1}, \eqref{lem1p2} and \eqref{lem1p3} into \eqref{lem1p7}, and using \eqref{lem1p3''}-\eqref{lem1H3}, that 
\begin{align}
\nonumber L[\tilde{v}](x) & = \frac{1}{|x|^3}\left( k\Lambda (R-|x|)|x| + \left(1+k\int_0^{R-|x|} \Lambda (t) dt\right) \right) \sum_{i,j=1}^n a_{ij}(x) x_ix_j \\
\nonumber & \ \ \ - \frac{1}{|x|}\left(1+k\int_0^{R-|x|} \Lambda (t) dt\right) \sum_{i=1}^n (a_{ii}(x) + b_i(x)x_i) \\
\nonumber & \ \ \ + c(x)\left( (R-|x|) + k\int_0^{R-|x| }\hspace{-6pt}\int_0^s \Lambda (t) dt ds \right) \\
\nonumber & \geq \Lambda (R-|x|) \left( k - 2n\left(\frac{2}{R} + 1 \right) - 2 \right) \\
\nonumber & = \Lambda (R-|x|) \\
\label{lem1p8} &>0
\end{align}
for all $x\in \Omega$. 
Now, define $v:\bar\Omega\to\mathbb{R}$ to be 
\begin{equation} \label{lem1p12} 
v(x) = \frac{\tilde{v}(x)m}{f(\epsilon )} \ \ \ \forall x\in \bar\Omega . 
\end{equation}
Then, via \eqref{lem1p12}, \eqref{lem1p4}, \eqref{v9.14'} and \eqref{lem1p1}, $v$ satisfies (i). 
Also, via \eqref{lem1p5'} $v$ satisfies (ii). 
Additionally, from \eqref{lem1p8} and \eqref{lem1p12}, $v$ satisfies (iii). 
Moreover, via \eqref{lem1p12}, \eqref{lem1p4}, \eqref{lem1p2} and \eqref{lem1p1}, it follows that 
\[ 
\partial_\nu v (x)|_{|x|=R} = \frac{-m}{f(\epsilon )} <0 ,
\]
and hence $v$ satisfies (iv), as required.  
\end{proof}

We now establish a CSMP for linear elliptic PDI which allows coefficients of $L$ to blow-up in neighborhoods of interior points of $\Omega$.
We note that one can recover a standard CSMP for linear elliptic PDI with bounded coefficients of appropriate sign (see for instance \cite{EH1}, \cite{PW1} or \cite{PPJS1}) by considering $\mathcal{S}=\emptyset$ with $\lambda$ a sufficiently large constant.

\begin{thm}[CSMP] \label{CSMP}
Let $\Omega\subset \mathbb{R}^n$ and $\mathcal{S}\subset\Omega$ satisfy the outward ball property.
Suppose that $u\in C^2(\Omega )$ satisfies the linear elliptic PDI $L[u]\geq 0$ on $\Omega \setminus \mathcal{S}$. 
In addition, suppose that for each $B_R(x_0)\subset (\Omega \setminus \mathcal{S})$ for which $\partial B_R(x_0) \cap \partial \Omega = \emptyset$, there exists a function $\Lambda :\left( 0,\frac{R}{2}\right] \to (0,\infty )$ which is continuous non-increasing and such that $\Lambda\in L^1\left(\left( 0,\tfrac{R}{2}\right]\right)$, and such that the coefficients of $L$ satisfy
\begin{equation} \label{csmp1} 
|y|^2 \leq \sum_{i,j=1}^n a_{ij}(x)y_iy_j \leq \Lambda (d(\{ x\} ,\partial B_R(x_0))) |y|^2 \ \ \ \forall x\in B_R(x_0) ,\ y\in \mathbb{R}^n , 
\end{equation}
\begin{equation} \label{csmp2} 
|b_i(x) | \leq \Lambda (d(\{ x \} ,\partial B_R(x_0))) \ \ \ \forall x\in B_R(x_0) , 
\end{equation}
\begin{equation} \label{csmp3} 
c(x) \geq \frac{-\Lambda (d(\{ x\} ,\partial B_R(x_0))) }{d(\{ x\} ,\partial B_R(x_0))} \ \ \ \forall x\in B_R(x_0) . 
\end{equation}
Additionally, let
\begin{equation} \label{csmp3'}
M_u=\sup_{x\in\Omega}u(x) 
\end{equation}
and suppose that either $M_u=0$, or $M_u >0$ with $c$ non-positive.
Then, $M_u > u(x)$ for all $x\in \Omega$ or $u$ is constant on $\Omega$. 
\end{thm}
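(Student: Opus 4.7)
My plan is to argue by contradiction: suppose $u$ is non-constant on $\Omega$ and yet $M_u = u(x^*)$ for some $x^* \in \Omega$. Let $\mathcal{T} := \{x \in \Omega : u(x) = M_u\}$, which is nonempty, relatively closed in $\Omega$ by continuity of $u$, and a proper subset since $u$ is not constant. I first reduce to the case $c \leq 0$ everywhere: when $M_u = 0$ we have $u \leq 0$, so setting $c^+ := \max(c,0) \geq 0$ and $c^- := \min(c,0) \leq 0$, the operator $\tilde L := L - c^+$ has non-positive zeroth-order coefficient $c^-$ and satisfies $\tilde L[u] = L[u] - c^+ u \geq 0$. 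The coefficients $a_{ij}, b_i$ are unchanged, and \eqref{csmp3} passes to $c^-$ because $-c^- = \max(-c,0) \leq \Lambda(d)/d$. Replacing $L$ by $\tilde L$ if necessary, I may thus assume $c \leq 0$ on $\Omega \setminus \mathcal{S}$.

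Applying the outward ball property to $\mathcal{T}$ produces $R > 0$ and $x_0 \in \Omega \setminus (\mathcal{T} \cup \mathcal{S})$ with $B_R(x_0) \subset \Omega \setminus (\mathcal{T} \cup \mathcal{S})$ and a contact point $x_1 \in \partial B_R(x_0) \cap \mathcal{T}$. A standard tangent-ball shrinking at $x_1$---replacing $(x_0, R)$ by $(x_1 + (R'/R)(x_0 - x_1), R')$ for sufficiently small $R' > 0$---lets me further assume $\bar{B}_R(x_0) \subset \Omega \setminus \mathcal{S}$ and $\bar{B}_R(x_0) \cap \mathcal{T} = \{x_1\}$, so that the hypotheses \eqref{csmp1}--\eqref{csmp3} supply a $\Lambda$ on $B_R(x_0)$. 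Taking $m := \min_{\bar{B}_{R-\epsilon}(x_0)} (M_u - u) > 0$ (strict positivity because the compact set is disjoint from $\mathcal{T}$), a translated application of Lemma \ref{lem1} produces a comparison function $v$ on the annulus $\mathcal{A} := B_R(x_0) \setminus \bar{B}_{R-\epsilon}(x_0)$ satisfying $v = 0$ on $\partial B_R(x_0)$, $v = m$ on $\partial B_{R-\epsilon}(x_0)$, $v \in C^1(\bar{\mathcal{A}}) \cap C^2(\mathcal{A})$, $L[v] > 0$ in $\mathcal{A}$, and $\partial_\nu v < 0$ on $\partial B_R(x_0)$.

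Set $w := (M_u - u) - v$ on $\bar{\mathcal{A}}$. By construction, $w \geq 0$ on $\partial B_R(x_0)$ (since $u \leq M_u$ and $v = 0$) and on $\partial B_{R-\epsilon}(x_0)$ (since $M_u - u \geq m = v$), while in $\mathcal{A}$ one has $L[w] = c M_u - L[u] - L[v] \leq -L[v] < 0$ using $c M_u \leq 0$, $L[u] \geq 0$, and Lemma \ref{lem1}(iii). A weak minimum principle argument then forces $w \geq 0$ on $\bar{\mathcal{A}}$: $w$ is continuous on the compact set $\bar{\mathcal{A}}$ and thus attains its minimum, and any putative interior minimiser $y \in \mathcal{A}$ with $w(y) < 0$ would satisfy $Dw(y) = 0$ and $D^2 w(y) \geq 0$, hence $L[w](y) \geq c(y) w(y) \geq 0$ by ellipticity together with $c(y) \leq 0$, in contradiction to $L[w](y) < 0$. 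Since $(M_u - u)(x_1) = 0 = v(x_1)$, this exhibits $x_1$ as a boundary minimiser of $w$ on $\bar{\mathcal{A}}$ with $w(x_1) = 0$.

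The Hopf-type derivative contradiction closes the proof. On the one hand, $x_1$ being a minimiser of $w$ on $\bar{\mathcal{A}}$ situated on the outer sphere forces the directional derivative of $w$ at $x_1$ in the inward direction $-\nu$ to be non-negative, equivalently $\partial_\nu w(x_1) \leq 0$. On the other hand, $u$ attains its global maximum at the interior point $x_1 \in \Omega$, so $\nabla u(x_1) = 0$ and $\nabla(M_u - u)(x_1) = 0$, giving $\partial_\nu w(x_1) = -\partial_\nu v(x_1) > 0$ by Lemma \ref{lem1}(iv); these two inequalities are incompatible. The main subtlety I anticipate lies in the ball-selection step: Definition \ref{D1} ensures only $\partial B_R(x_0) \cap \mathcal{T} \neq \emptyset$, not that the contact point can be chosen outside $\mathcal{S}$ nor that $\bar{B}_R(x_0)$ avoids $\mathcal{S}$, which is required for Lemma \ref{lem1} to apply with finite coefficients throughout the closed annulus. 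Handling this cleanly may demand either a mild strengthening of the outward ball property or an explicit selection of $x_1 \in \mathcal{T} \setminus \mathcal{S}$ exploiting that $\mathcal{S}$ is a Lebesgue-null set (item (iv) in the discussion following Definition \ref{D1}) prior to the tangential shrinking.
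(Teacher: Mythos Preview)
Your argument is correct and follows the same Hopf-barrier strategy as the paper: locate a ball tangent to the maximum set $\mathcal T$ via the outward ball property, build the comparison function $v$ from Lemma~\ref{lem1} on an annulus, and extract a contradiction between $\partial_\nu v<0$ and $Du=0$ at the interior contact point. The only cosmetic difference is that the paper applies Lemma~\ref{lem1} to the operator with $c$ replaced by the worst-case $\tilde c(x)=-\Lambda(R-|x|)/(R-|x|)$ and then handles the genuine zeroth-order contributions $-c\,u-\tilde c\,v$ by hand at the putative interior extremum (this is where the dichotomy ``$M_u=0$'' versus ``$c\le 0$'' enters, via the bound $\min\{u,0\}+v>0$); your upfront reduction to $c\le 0$ lets you apply Lemma~\ref{lem1} directly to $L$ and run a cleaner weak minimum principle.

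On the subtlety you flag at the end: you are worrying more than necessary. You do \emph{not} need $\bar B_R(x_0)\subset\Omega\setminus\mathcal S$, nor $x_1\notin\mathcal S$, nor uniqueness of the contact point. The hypotheses \eqref{csmp1}--\eqref{csmp3} and the coefficient bounds in Lemma~\ref{lem1} are only required on the \emph{open} ball $B_R(x_0)\subset\Omega\setminus\mathcal S$ (guaranteed by Definition~\ref{D1}); the contact point $x_1$ lies on $\partial B_R(x_0)$ where neither $L[u]\ge 0$ nor any coefficient bound is invoked---only $u\in C^2(\Omega)$, which gives $Du(x_1)=0$. What you do need is $\partial B_R(x_0)\cap\partial\Omega=\emptyset$, and this follows from your tangential shrinking since $x_1\in\mathcal T\subset\Omega$ lies at positive distance from $\partial\Omega$. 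The paper's phrase ``sufficiently small $B_R(x_0)$'' is exactly this shrinking step; no strengthening of the outward ball property is required.
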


\begin{proof}
Suppose that $u$ is not constant on $\Omega$, and 
\begin{equation} \label{CSMP1} 
\mathcal{T} = \left\{ x\in \Omega :\ u(x) = M_u \right\} 
\end{equation}
is not empty. 
Since $\mathcal{T}$ is a relatively closed strict subset of $\Omega$ and $\mathcal{S}$ satisfies the outward ball property, it follows that there exists a sufficiently small $B_R(x_0)$ such that $B_R(x_0)\subset \Omega \setminus (\mathcal{T}\cup \mathcal{S})$, $\partial B_R(x_0) \cap \mathcal{T} = \{ y_0 \}$ and $\partial B_R(x_0) \cap \partial \Omega = \emptyset$. 
Moreover, it follows from \eqref{csmp1}-\eqref{csmp3} and the hypotheses on $L$ and $\Lambda$, that Lemma \ref{lem1} can be applied to a linear elliptic operator $\tilde{L}$ defined in $\Omega_0 = B_{R}(O) \setminus \bar{B}_{R-\epsilon}(O)$, for sufficiently small $\epsilon \in (0,R)$, with coefficients given by
\begin{equation} \label{edit1} 
\tilde{a}_{ij}(x)=a_{ij}(x+x_0),\ \ \ \tilde{b}_i(x) = b_i(x+x_0),\ \ \  \tilde{c}(x) = \frac{-\Lambda (R-|x|)}{(R-|x|)},\ \ \ \forall x\in \Omega_0 
\end{equation}
with 
\[ m=M_u-\left( \sup_{\partial B_{R-\epsilon}(x_0)} u \right) >0,\]
to guarantee the existence of $v:\bar{B}_{R}(O)\setminus B_{R-\epsilon}(O)\to [0,m]$ that satisfies the conclusions of Lemma \ref{lem1}.
Now, define $w:\overline{\Omega}_0\to\mathbb{R}$ to be 
\begin{equation} \label{bplp1} 
w(x) = u(x+x_0) + v(x) - M_u \ \ \ \forall x\in \overline{\Omega}_0 .
\end{equation}
It follows that $w\in C^2(\Omega_0)\cap C^1(\overline{\Omega}_0)$ and $\sup_{\partial \Omega_0}w = w(y_0-x_0)=0$.
Additionally, it follows that $w \leq 0$ on $\Omega_0$, for suppose that the converse holds i.e. that there exists $x^*\in \Omega_0$ such that $\sup_{x\in\Omega_0}w(x) = w(x^*)>0$. 
Then since $L[u](x^*+x_0)\geq 0$ and $\tilde{L}[v](x^*) >0$, via \eqref{edit1} and \eqref{bplp1} we have,
\begin{align}
\label{edit3} \sum_{i,j=1}^n \tilde{a}_{ij}(x^*)w_{x_ix_j}(x^*) + \sum_{i=1}^n \tilde{b}_i(x^*) w_{x_i}(x^*) & > -c(x^*+x_0)u(x^*+x_0)-\tilde{c}(x^*)v(x^*)\\
\nonumber & \geq \frac{\Lambda (R-|x^*|)}{(R-|x^*|)}(\min\{ u(x^*+x_0),\ 0\} + v(x^*)) \\
\nonumber & > 0
\end{align}
via \eqref{csmp3}, \eqref{bplp1} and the hypotheses. 
However, since there is a local maxima of $w$ at $x^*$, then $Dw (x^*) =0$, and $D^2w (x^*)$ is negative semi-definite.
Consequently, via the Schur Product Theorem, the left hand side of \eqref{edit3} is non-positive, which gives a contradiction, and hence, $w \leq 0$ on $\Omega_0$.
Therefore, $\partial_\nu w(y_0-x_0) \geq 0$, and hence $\partial_\nu u(y_0) \geq -\partial_\nu v(y_0-x_0) >0$. 
However, since there is a local maxima of $u$ at $y_0$, it follows from the regularity of $u$ that $Du(y_0)=0$, which contradicts $\partial_\nu u(y_0) >0$.
Therefore, either $u$ is constant on $\Omega$, or $u<M_u$ on $\Omega$, as required. 
\end{proof}

\begin{remk}
Note that in Theorem \ref{CSMP}, the conditions on the coefficients of $L$ apply on balls which satisfy $\partial B_{R}(x_0)\cap \bar{\mathcal{S}}\not= \emptyset$ but not on balls which satisfy $B_{R}(x_0)\cap \bar{\mathcal{S}}\not= \emptyset$.
Thus, although the coefficients of $L$ can, under constraints \eqref{csmp1}-\eqref{csmp3}, blow-up as $x\to \bar{\mathcal{S}}$, they cannot blow up (except $c$ negatively) as $x\to x_0$ for $x_0\in \Omega \setminus \bar{\mathcal{S}}$.
Moreover, observe that the coefficients of $L$ can blow-up as $x\to\partial \Omega$ with conditions \eqref{csmp1}-\eqref{csmp3} not required to hold on $B_R(x_0)$ such that $\partial B_R(x_0)\cap \partial \Omega \not= \emptyset$. 
However, for a BPL to hold for a linear elliptic operator $L$ on $\Omega$, conditions \eqref{csmp1}-\eqref{csmp3} are required to hold on balls $B_R(x_0)$ such that $\partial B_R(x_0)\cap \partial \Omega \not= \emptyset$.
This is the principal difference in hypothesis between BPL and CSMP for linear elliptic PDI.
\end{remk}

A straightforward application of Theorem \ref{CSMP} gives an associated BPL for classical solutions to linear elliptic PDI. 

\begin{thm}[BPL] \label{BPL1}
Suppose that the hypotheses of Theorem \ref{CSMP} hold, with the restriction that `for which $\partial B_{R}(x_0) \cap \partial \Omega = \emptyset$' is omitted.\footnote{i.e. for the BPL, we also impose conditions \eqref{csmp1}-\eqref{csmp3} on $B_R(x_0)\subset \Omega \setminus \mathcal{S}$ such that $\partial B_R(x_0)\cap \partial \Omega \not= \emptyset$.} 
In addition, suppose that $u\in C^1(\bar{\Omega})$ and $\sup_{x\in\Omega} u(x)=u(x_b)$ for some $x_b\in \partial \Omega$ such that there exists $B_{R_b}(x_b')\subset\Omega \setminus\mathcal{S}$ that satisfies $x_b \in \partial B_{R_b}(x_b')$.
If $u$ is not constant on $\Omega$, then $\partial_\nu u(x_b) >0$.
\end{thm}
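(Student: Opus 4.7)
The plan is to reuse the barrier construction from the proof of Theorem \ref{CSMP} verbatim, applied now to the interior touching ball $B_{R_b}(x_b')$, so that the normal derivative of the comparison function $v$ transfers a strict sign to $u$ at $x_b$.

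First I would invoke Theorem \ref{CSMP} itself. Since $u$ is not constant and attains $M_u$ at the boundary point $x_b$, the CSMP yields $u(x)<M_u$ for all $x\in\Omega$. Hence, after translating so that $x_b'$ becomes the origin, for every $\epsilon\in(0,R_b)$ the compactness of $\partial B_{R_b-\epsilon}(x_b')\subset\Omega$ and the continuity of $u$ give
\[
m := M_u - \sup_{\partial B_{R_b-\epsilon}(x_b')} u > 0 .
\]
Because $B_{R_b}(x_b')\subset\Omega\setminus\mathcal{S}$, the coefficient bounds \eqref{csmp1}--\eqref{csmp3}, now assumed also on touching balls, supply a continuous non-increasing $\Lambda\in L^1((0,R_b/2])$ controlling $a_{ij},b_i,c$ on $B_{R_b}(x_b')$ in terms of $d(\{x\},\partial B_{R_b}(x_b'))$. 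Choosing $\epsilon>0$ small enough so that \eqref{lem1p3''} holds for the constant $k=2n(2/R_b+1)+3$, Lemma \ref{lem1} applied on $\Omega_0=B_{R_b}(O)\setminus\bar{B}_{R_b-\epsilon}(O)$ with coefficients translated as in \eqref{edit1} produces a barrier $v:\bar\Omega_0\to[0,m]$ with the properties (i)--(iv).

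Next, exactly as in the CSMP proof, set
\[
w(x) = u(x+x_b') + v(x) - M_u \qquad \forall x\in\bar\Omega_0 .
\]
On $\partial B_{R_b}(O)$, $v=0$ and $u(\cdot+x_b')\le M_u$, so $w\le0$; on $\partial B_{R_b-\epsilon}(O)$, $v=m$ and $u(\cdot+x_b')\le M_u-m$ by the choice of $m$, so $w\le0$. If $w$ attained a positive interior maximum at some $x^*\in\Omega_0$, then $Dw(x^*)=0$ and $D^2w(x^*)$ would be negative semi-definite, whence the second-order part of $L$ applied to $w$ at $x^*$ is non-positive by the Schur product theorem; combining $L[u](x^*+x_b')\ge0$ with $\tilde L[v](x^*)>0$ and the sign hypothesis on $c$ (either $M_u=0$ or $c\le0$ with $M_u>0$) produces the same contradiction as in \eqref{edit3}. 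Hence $w\le0$ throughout $\Omega_0$.

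Finally, the touching point $y_0:=x_b-x_b'$ lies on $\partial B_{R_b}(O)$, and $w(y_0)=0$ by construction, so $y_0$ is a boundary maximum of $w$. Since $u\in C^1(\bar\Omega)$ and $v\in C^1(\bar\Omega_0)$, the outward directional derivative of $w$ at $y_0$ is well defined and satisfies $\partial_\nu w(y_0)\ge0$, giving
\[
\partial_\nu u(x_b) \;\ge\; -\partial_\nu v(y_0) \;>\; 0
\]
by Lemma \ref{lem1}(iv). The only delicate point is verifying that the coefficient bounds on the touching ball $B_{R_b}(x_b')$ really do satisfy the hypotheses of Lemma \ref{lem1} after translation — this is precisely the reason the BPL requires the bounds \eqref{csmp1}--\eqref{csmp3} on balls that touch $\partial\Omega$, which the CSMP did not. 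Everything else in the argument is a direct transcription of the interior barrier argument already carried out in the proof of Theorem \ref{CSMP}.
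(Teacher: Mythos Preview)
Your proposal is correct and follows essentially the same approach as the paper: invoke Theorem \ref{CSMP} to get $u<M_u$ on $\Omega$, then apply Lemma \ref{lem1} on the translated annulus inside the touching ball $B_{R_b}(x_b')$ to build the barrier $v$, form $w$ exactly as in \eqref{bplp1}, rule out an interior positive maximum via the computation in \eqref{edit3}, and read off $\partial_\nu u(x_b)\ge -\partial_\nu v(y_0)>0$. The paper's own proof is a two-sentence sketch pointing back to the CSMP argument, and your write-up is precisely the expanded version of that sketch.
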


\begin{proof}
Since $u$ satisfies the conditions of Theorem \ref{CSMP} and is not constant, it follows that $u(x) < u(x_b)$ for all $x\in B_{R_b}(x_b')$. 
A function analogous to $w$ in \eqref{bplp1} can now be constructed, from which, we can conclude (as in the proof of Theorem \ref{CSMP}) that $\partial_\nu u(x_b)>0$, as required.    
\end{proof}

\subsection{Comparison-type BPL for elliptic classical solutions to quasi-linear PDI}
In this subsection we establish a comparison type BPL for classical elliptic solutions to quasi-linear PDI using the approach described in \cite[Chapter 2]{PPJS1}. 
Specifically, via an application of Theorem \ref{BPL1}, a BPL for classical elliptic solutions to quasi-linear PDI can be established. 
Although the proof is standard, we provide it to inform the discussion that follows.

\begin{thm}[BPL]\label{t2} 
Suppose that $u,v:\bar\Omega\to\mathbb{R}$ satisfy $u,v\in C^2(\Omega )\cap C^1(\bar{\Omega })$ and the quasi-linear PDI $Q[u]\geq 0$ and $Q[v]\leq 0$ on $\Omega$. 
Furthermore, suppose that $Q$ is elliptic with respect to $u$, with $v_{x_ix_j}$ bounded on $\Omega$ (or instead suppose that $Q$ is elliptic with respect to $v$, with $u_{x_ix_j}$ bounded on $\Omega$) for $i,j=1,\ldots,n$.
Suppose that $u<v$ in $\Omega$ and $u=v$ at $x_b\in\partial \Omega$ for which, there exists $B_{R_b}(x_b')\in\Omega$ with $x_b \in \partial B_{R_b}(x_b')$.
Suppose that there exists a continuous non-increasing function $\Lambda :\left( 0, \frac{R_b}{2} \right] \to (0,\infty )$ such that $\Lambda \in L^1\left( \left( 0, \frac{R_b}{2} \right]\right)$,  
\begin{equation} \label{t2A}
|A_{ij}(x,z_1,\eta_1) - A_{ij}(x,z_2,\eta_2)| \leq \Lambda (d(\{ x\} ,\partial B_{R_b}(x_b') )) \left( \frac{|z_1-z_2|}{d(\{ x\} ,\partial B_{R_b}(x_b'))} + \sum_{l=1}^n|\eta_{1l}-\eta_{2l}| \right) 
\end{equation}
for all $(x,z_1,\eta_1),(x,z_2,\eta_2)\in B_{R_b}(x_b') \times [-M_z,M_z]\times [-M_\eta , M_\eta ]^n$, and
\begin{equation} \label{t2B} 
B(x,z_1 , \eta_1 ) - B(x,z_2,\eta_2 ) \geq - \Lambda (d(\{ x\} ,\partial B_{R_b}(x_b') )) \left( \frac{(z_1 - z_2)}{d(\{ x\} , \partial B_{R_b}(x_b') )} + \sum_{l=1}^n|\eta_{1l}-\eta_{2l}| \right)  
\end{equation}
for all $(x,z_1,\eta_1),(x,z_2,\eta_2)\in B_{R_b}(x_b') \times [-M_z,M_z]\times [-M_\eta , M_\eta ]^n$ with $z_1 \geq z_2$, with 
\[ 
M_z=\sup_{x\in B_{R_b}(x_b')} \{ |u(x)|,|v(x)|\} \text{ and } M_\eta = \sup_{\substack{x\in B_{R_b}(x_b') \\ i=1,\dots ,n}}\{ |u_{x_i}(x)|,|v_{x_i}(x)|\} .
\]
Then $\partial_\nu u(x_b) > \partial_\nu v(x_b)$.
\end{thm}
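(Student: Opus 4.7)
The plan is to linearise $Q[u] - Q[v]$ and then apply Theorem \ref{BPL1} to $w := u - v$ on the interior ball $B_{R_b}(x_b')$. Note that $w \in C^2(\Omega) \cap C^1(\bar\Omega)$, $w < 0$ on $\Omega$ by hypothesis, $w(x_b) = 0$, and the outward unit normal to $\Omega$ at $x_b$ coincides with the outward unit normal to $B_{R_b}(x_b')$ at $x_b$. Consequently, it suffices to construct a linear elliptic operator $L$ on $B_{R_b}(x_b')$ such that $L[w] \geq 0$ and whose coefficients satisfy the hypotheses of Theorem \ref{BPL1} (on $B_{R_b}(x_b')$, with empty singular set); Theorem \ref{BPL1} will then force $\partial_\nu w(x_b) > 0$, which is exactly $\partial_\nu u(x_b) > \partial_\nu v(x_b)$.

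Adding and subtracting $A_{ij}(x, u, Du)\, v_{x_ix_j}$ inside $Q[u] - Q[v]$ gives
\[ 0 \;\leq\; Q[u] - Q[v] \;=\; \sum_{i,j=1}^n A_{ij}(x, u, Du)\, w_{x_ix_j} + \Xi(x) + \Theta(x), \]
where $\Xi(x) := \sum_{i,j}\bigl[A_{ij}(x,u,Du) - A_{ij}(x,v,Dv)\bigr] v_{x_ix_j}$ and $\Theta(x) := B(x,u,Du) - B(x,v,Dv)$. Write $d(x) := d(\{x\}, \partial B_{R_b}(x_b'))$ and $K := \sup_\Omega |v_{x_ix_j}|$. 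The Lipschitz estimate (\ref{t2A}), together with $w \leq 0$ (so $|u - v| = -w$), yields
\[ |\Xi(x)| \;\leq\; n^2 K\, \Lambda(d(x)) \Bigl( \tfrac{-w(x)}{d(x)} + \sum_{l=1}^n |w_{x_l}(x)| \Bigr), \]
and applying (\ref{t2B}) with $z_1 = v$, $z_2 = u$ (admissible since $v \geq u$) delivers an identical-shape upper bound for $\Theta(x)$. Define the lower-order coefficients
\[ b_i(x) := (n^2 K + 1)\,\Lambda(d(x))\, \mathrm{sgn}(w_{x_i}(x)), \qquad c(x) := -\frac{(n^2 K + 1)\,\Lambda(d(x))}{d(x)}, \]
and set $L[\phi] := \sum_{i,j} A_{ij}(x, u, Du)\, \phi_{x_ix_j} + \sum_i b_i(x)\, \phi_{x_i} + c(x)\, \phi$. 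The sign convention gives $b_i w_{x_i} = (n^2 K + 1)\Lambda(d)\, |w_{x_i}|$, and since $w \leq 0$ and $c \leq 0$ one has $c w = (n^2 K + 1)\Lambda(d)(-w)/d$, so that $\sum_i b_i w_{x_i} + c w \geq \Xi + \Theta$. Combining, $L[w] \geq 0$ on $B_{R_b}(x_b')$.

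To finish, one checks that $L$ fits the hypotheses of Theorem \ref{BPL1} on the domain $B_{R_b}(x_b')$ with singular set $\mathcal{S} = \emptyset$: the ellipticity lower bound is inherited from $Q$ being elliptic with respect to $u$, and with $\tilde\Lambda := (n^2 K + 1)\Lambda$ (enlarged if necessary to majorise the ellipticity upper bound governing $Q$ at $u$) both (\ref{csmp1})--(\ref{csmp3}) hold, while $\tilde\Lambda$ inherits continuity, monotonicity and $L^1$-integrability from $\Lambda$. Since $\sup_{B_{R_b}(x_b')} w = w(x_b) = 0$ with $w$ non-constant, Theorem \ref{BPL1} yields $\partial_\nu w(x_b) > 0$ as required. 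The symmetric case (when $Q$ is elliptic with respect to $v$ and $u_{x_ix_j}$ is bounded) is handled by repeating the argument with $w := v - u$ and linearising at $v$. The main obstacle is the absorption of the non-linear remainder $\Xi + \Theta$ into lower-order coefficients that still satisfy the singular-growth bounds of the BPL; this is made possible precisely by the sign of $w$, the pointwise sign-matching choice of $b_i$, and the one-sided form of hypothesis (\ref{t2B}).
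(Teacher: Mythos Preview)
Your proof is correct and follows essentially the same approach as the paper: set $w=u-v$, linearise $Q[u]-Q[v]$ about $u$ so that the second-order part is $\sum A_{ij}(\cdot,u,Du)w_{x_ix_j}$, absorb the remainder into first- and zero-order coefficients built from $\Lambda(d(\cdot))$ and sign functions, and then invoke Theorem~\ref{BPL1} on $B_{R_b}(x_b')$ with $M_w=0$. The only cosmetic difference is that the paper splits the $A_{ij}$- and $B$-differences into separate $z$-variation and $\eta$-variation pieces before bounding, whereas you apply \eqref{t2A} and \eqref{t2B} in a single step; this changes nothing of substance.
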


\begin{proof}
Let $w=u-v$ on $\overline{B_{R_b}(x_b')}$. 
Then, on $B_{R_b}(x_b')$, 
\begin{align*}
\nonumber 0 &\leq \sum_{i,j=1}^n\left( A_{ij} (\cdot ,u,Du) u_{x_ix_j} - A_{ij} (\cdot ,v,Dv) v_{x_ix_j}\right) + B(\cdot ,u,Du) - B(\cdot ,v,Dv) \\
\nonumber & = \sum_{i,j=1}^n \big(A_{ij}(\cdot ,u,Du)(u_{x_ix_j} - v_{x_ix_j})+ (A_{ij}(\cdot ,u,Du) -A_{ij}(\cdot ,u,Dv)) v_{x_ix_j} \\
\nonumber & \hspace{10mm} + \left( A_{ij}(\cdot ,u,Dv) -A_{ij}(\cdot ,v,Dv)\right) v_{x_ix_j}\big) \\  
\nonumber & \ \ \  + \left( B(\cdot ,u,Du) - B(\cdot ,u,Dv)\right) + \left( B(\cdot ,u, Dv) - B(\cdot , v, Dv)\right) \\
\nonumber & \leq \sum_{i,j=1}^n A_{ij}(\cdot , u , Du) w_{x_ix_j} + \Lambda (d(\cdot , \partial B_{R_b}(x_b') )) \sum_{i=1}^n \left( \text{sgn} (w_{x_i}) + \sum_{k,l=1}^n v_{x_kx_l}\text{sgn} (v_{x_kx_l}w_{x_i}) \right)w_{x_i} \\
\nonumber & \ \ \ + \left( \frac{\Lambda (d(\cdot , \partial B_{R_b}(x_b') ))}{d(\cdot , \partial B_{R_b}(x_b') )} \sum_{k,l=1}^n v_{x_kx_l}\text{sgn} (v_{x_kx_l}w) + \frac{\left( B(\cdot ,u, Dv) - B(\cdot , v, Dv)\right)}{w}\right) w \\
\nonumber & =: \sum_{i,j=1}^n \tilde{a}w_{x_ix_j} + \sum_{i=1}^n \tilde{b}_kw_{x_k} + \tilde{c}w   
\end{align*}
where, for $i,j=1,\dots ,n$, $\tilde{a}_{ij},\tilde{b}_i,\tilde{c}: \Omega\to\mathbb{R}$ are given by, 
\begin{align}
\tilde{a}_{ij}=&\ A_{ij}(\cdot,u,Du) \label{a} , \\
\tilde{b}_i=& \Lambda (d(\cdot ,\partial B_{R_b}(x_b') ))\left( \text{sgn}(w_{x_i}) +  \sum_{k,l=1}^n  v_{x_kx_l}\text{sgn}(v_{x_kx_l}w_{x_i})\right) \label{b} ,\\
\tilde{c} =& \frac{\Lambda (d(\cdot ,\partial B_{R_b}(x_b') ))}{d(\cdot ,\partial B_{R_b}(x_b') )}\sum_{k,l=1}^n v_{x_kx_l} \text{sgn}(v_{x_kx_l}w) + \frac{B(\cdot,u,Dv)-B(\cdot,v,Dv)}{(u-v)} \label{c} \\
\geq & -\frac{\Lambda (d(\cdot ,\partial B_{R_b}(x_b') ))}{d(\cdot ,\partial B_{R_b}(x_b') )}(n^2\sup_{\substack{k,l=1,...,n \\ x\in B_{R_b}(x_b')}}|v_{x_kx_l}(x)| +1)  , \label{d}
\end{align}
on $B_{R_b}(x_b')$.
Thus, it follows that $\tilde{L}$ is a linear elliptic operator on $B_{R_b}(x_b')$, that satisfies the conditions of Theorem \ref{BPL1}, provided that we consider $\Lambda$ in Theorem \ref{BPL1} as that in \eqref{a}-\eqref{d} after multiplication by a sufficiently large constant. 
An application of Theorem \ref{BPL1} yields $\partial_\nu w >0$ at $x_b$ and hence,
\[ 
\partial_\nu u(x_b) > \partial_\nu v (x_b), 
\]
as required.
\end{proof}

\begin{remk}
Note that conditions \eqref{t2A} and \eqref{t2B} ensure that: 
$A_{ij}$ are locally Lipschitz continuous in $z$ and $\eta$ on $\Omega$; 
$B$ is locally lower Lipschitz in $z$ and Lipschitz continuous in $\eta$;
and the associated Lipschitz and lower Lipschitz constants for $A_{ij}$ and $B$ can tend to $\infty$ as $x\to \partial \Omega $ but are constrained by the integrability condition on $\Lambda$.
Additionally observe that the conditions in Theorem \ref{t2} can be readily altered to accommodate $d(x,\partial \Omega)$ instead of $d(x ,\partial B_{R_b}(x_b'))$.
\end{remk}
   
We now demonstrate that if the bound on the lower Lipschitz constant for $B$ in Theorem \ref{t2} is relaxed to a mere local lower Lipschitz condition, then the conclusion of Theorem \ref{t2} does not necessarily hold.   

\begin{ex} \label{exam1}
Suppose that $\Omega\subset\mathbb{R}^n$ and for $x_b\in\partial \Omega$ there exists $B_{R_b}(x_b')\in\Omega$ with $x_b \in \partial B_{R_b}(x_b')$.
Consider $u:\bar{\Omega}\to\mathbb{R}$ given by,  
\begin{equation} \label{e1} 
u(x) = 0 \ \ \ \forall x\in \bar{\Omega}, 
\end{equation}
and $v:\bar{\Omega}\to\mathbb{R}$ such that:
\begin{equation} \label{xv1} 
v\in C^\infty(\bar{\Omega}),
\end{equation}
\begin{equation} \label{xv2} 
v>0 \text{ in } \Omega , 
\end{equation}
\begin{equation} \label{xv3} 
v(x_b)=0 \text{ and } \partial_\nu v(x_b)=0. 
\end{equation}
Note that \eqref{xv1} implies that there exists $M\geq 0$ such that for $i,j=1,\ldots , n$,
\begin{equation} \label{31'}
|v|,\ |v_{x_i}|,\ |v_{x_ix_j}| \leq M \text{ for all }x\in \bar\Omega . 
\end{equation}
Now, for the quasi-linear PDI in \eqref{pi1''}, set $A_{ij}:\Omega\times\mathbb{R}\times\mathbb{R}^n\to\mathbb{R}$ to be 
\begin{equation} \label{e2} 
A_{ij}(x,z,\eta ) = \delta_{ij} \ \ \ \forall (x,z,\eta )\in \Omega\times\mathbb{R}\times\mathbb{R}^n , 
\end{equation}
for all $i,j=1,\dots ,n$ and $B:\Omega\times\mathbb{R}\times\mathbb{R}^n\to\mathbb{R}$ to be
\begin{equation} \label{e3} 
B(x,z,\eta ) = -\frac{z}{v(x)}\sum_{i=1}^nv_{x_ix_i}(x) \ \ \ \forall (x,z,\eta )\in \Omega\times\mathbb{R}\times\mathbb{R}^n . 
\end{equation}
Since the coefficients of $A_{ij}$ define a Laplacian, it can be seen that $A_{ij}$ satisfies the conditions of Theorem \ref{t2} (with, for example, $\Lambda = 1$), and also, that $Q$ is elliptic with respect to $u$ with $v_{x_ix_j}$ bounded on $\Omega$. 
Moreover, observe that $B$ is independent of $\eta$, and
\begin{equation} \label{e5} 
B(x,z_1 ,\eta_1 )-B(x, z_2 ,\eta_2 ) = -\frac{z_1-z_2}{v(x)}\sum_{i=1}^nv_{x_ix_i}(x) \geq -M_K(z_1- z_2) ,  
\end{equation}
for all $(x,z_1 ,\eta_1 ),(x, z_2 ,\eta_2)\in K$ such that $z_1 \geq z_2 $, with $K= \Omega ' \times [-M,M]\times [-M,M]^n$ for any $\Omega '$ that is a compact subset of $\Omega$, where via \eqref{xv2} and \eqref{31'},
\begin{equation} \label{36'}  
M_K=\frac{Mn}{\inf_{x\in\Omega '}\{v(x)\}} > 0 . 
\end{equation}
Therefore, it follows from \eqref{e5} and \eqref{36'} that $B(x,z,\eta )$ satisfies the conditions of Theorem \ref{t2} with the exception of the lower Lipschitz condition, which instead holds only locally on $\Omega \times \mathbb{R}\times\mathbb{R}^n$. 
It follows from \eqref{e1}-\eqref{xv3}, that $Q[u]\geq 0$ and $Q[v]\leq 0$ on $\Omega$, for $Q$ defined by \eqref{e2} and \eqref{e3}. 
Moreover, via \eqref{e1}, \eqref{xv2} and \eqref{xv3} $u<v$ in $\Omega$ and $u(x_b)=v(x_b)$ for $x_b\in\partial \Omega$. 
In conclusion, although $u$, $v$, $A_{ij}$ and $B$ satisfy all of the conditions of Theorem \ref{t2} (with the exception of the lower Lipschitz condition on $B$, or alternatively \eqref{t2B}), via \eqref{xv3}, 
\[ \partial_\nu u(x_b) = \partial_\nu v(x_b) , \]
which violates the conclusion of Theorem \ref{t2}. 
\end{ex}


\begin{remk}
We note that $u$, $v$, $A_{ij}$ and $B$ in Example \ref{exam1} satisfy all of the conditions of \cite[Theorem 2.7.1]{PPJS1}, but violate the conclusion. 
This occurs since an unconstrained local lower Lipschitz constant is supposed on $B$ with respect to $z$ in \cite[Theorem 2.7.1]{PPJS1}, which is an error. 
It is noteworthy that essentially the same error can be found in the statement of a BPL for classical solutions to linear parabolic PDI given in \cite[p.174, Theorem 7]{PW1}, as illustrated in \cite{DNJM1}.
We also highlight that in both of these instances, a direct proof of the associated BPL is not given, but instead, only the main ideas of the proofs are described. 
\end{remk}

\begin{remk}
If $\partial_{\nu\nu} v(x_b) >0$ in Example \ref{exam1}, by considering $K=B_R(x_b')\times [-M,M]\times [-M,M]^n$ with $0<R<R_b$, it follows from \eqref{xv1} that as $R\to R_b$,
\begin{equation} \label{rem1} 
M_K \geq \frac{2Mn}{v_{\nu\nu}(x_b) ( R_b-R )^2 + O( (R-R_b)^3)} \geq \frac{Mn}{v_{\nu\nu}(x_b)d(\partial B_{R}(x_b'), \partial B_{R_b}(x_b'))^2} .
\end{equation}
Thus, we observe that $B$ in \eqref{e5} satisfies the conditions Theorem \ref{t2} with the exception of $\Lambda\in L^1\left( \left( 0, \frac{R_b}{2} \right] \right)$ in \eqref{t2B}.
This follow from letting $R\to R_b$ in \eqref{rem1} which implies that $\lambda$ necessarily satisfies 
\[ \Lambda (d) \geq \frac{Mn}{v_{\nu\nu}(x_b)d} \text{ as } d\to 0^+ .\]
\end{remk}

Now, we highlight the necessity of the bound on $v_{x_ix_j}$ (or $u_{x_ix_j}$) in Theorem \ref{t2}.
Note that this condition is not present in \cite[Theorem 2.7.1]{PPJS1}.

\begin{ex}
Let $\Omega = (0,1)\subset \mathbb{R}$ and $u,v:\bar{\Omega}\to\mathbb{R}$ be given by
\begin{equation} \label{1ex3} 
u(x) = x^{1+\alpha },\ \ \ v(x)=2x^{1+\alpha} \ \ \ \forall x\in \bar\Omega , 
\end{equation}
with constant $\alpha \in (0,1)$. 
It follows that $u,v\in C^1(\bar{\Omega} )\cap C^2(\Omega )$, $v>u$ in $\Omega$, and for $x_b=0\in\partial \Omega$, we have $u(x_b)=v(x_b)=0$.
Now, consider the quasi-linear operator $Q$ with $A:\Omega \times \mathbb{R}\times \mathbb{R} \to\mathbb{R}$ given by
\begin{equation} \label{5ex3} 
A(x,z,\eta ) = 1 + \frac{2}{x^{1+\alpha}}\left(\frac{3}{2}x^{1+\alpha} - z\right) \ \ \  \forall (x,z,\eta )\in \Omega \times \mathbb{R}\times \mathbb{R} 
\end{equation}
with $B=0$ on $\Omega \times \mathbb{R}\times \mathbb{R} \to\mathbb{R}$. 
Since 
\begin{equation} \label{ed2} 
Q[u]= A(\cdot,u,Du)u_{xx} = 2u_{xx}\geq 0,\ \ \ Q[v]= A(\cdot,v,Dv)v_{xx} \leq 0 ,
\end{equation}
on $\Omega$, it follows that $Q$ is elliptic with respect to $u$, and that $Q$ satisfies the conditions \eqref{t2A} and \eqref{t2B} in Theorem \ref{t2} with $\Lambda :(0,1]\to (0,\infty )$ given by 
\[ 
\Lambda (d) = \frac{2}{d^{\alpha }} \ \ \ \forall d \in \left( 0,\tfrac{1}{2}\right] . 
\]
Since $\Lambda$ is continuous non-increasing and $\Lambda \in L^1\left( \left( 0,\tfrac{1}{2}\right] \right)$, it follows from \eqref{1ex3}-\eqref{ed2} that $u$ and $v$ satisfy all of the conditions of Theorem \ref{t2} with the exception of $v_{xx}$ being bounded on $\Omega$.
However, via \eqref{1ex3},
\[ 
u_{\nu}(x_b)=v_\nu(x_b)=0 ,
\]
which violates the conclusion of Theorem \ref{t2}. 
\end{ex}

\section{Weak Theory}

In this section, we establish a comparison-type tangency principle, for $C^1$ weak elliptic solutions to divergence structure PDI which is a correction of that stated in \cite[Theorem 2.7.2]{PPJS1}.
The proof largely follows that of \cite[Theorem 2.7.2]{PPJS1} with additional details included to highlight the additional hypotheses.
We also provide simple counter-examples to \cite[Theorems 2.7.2 and 2.7.3]{PPJS1}.

\subsection{Notation and Definitions}

The quasi-linear divergence structure PDI we consider are given by:
\begin{equation} \label{5a} 
\text{div}(A(\cdot , u, Du)) + B(\cdot , u, Du) \geq 0 \ \ \ \text{ on } \Omega ,
\end{equation}
\begin{equation} \label{5b} 
\text{div}(A(\cdot , v, Dv)) + B(\cdot , v, Dv) \leq 0 \ \ \ \text{ on } \Omega ,
\end{equation}
with $A:\Omega \times \mathbb{R} \times \mathbb{R}^n\to\mathbb{R}^n$ and $B:\Omega \times \mathbb{R} \times \mathbb{R}^n\to\mathbb{R}$.
Specifically, we consider $C^1$ weak solutions to \eqref{5a} (and analogously \eqref{5b}) that satisfy: 
$u\in C^1(\bar{\Omega})$, $A(\cdot , u, Du),\ B(\cdot , u, Du)\in L_{\textrm{loc}}^1(\Omega )$ and 
\begin{equation} \label{5a'}
 \int_{\Omega} A (x,u(x),Du(x)) \cdot D\psi (x) dx \leq \int_\Omega B(x,u(x),Du(x)) \psi(x) dx
\end{equation}
for any test function $\psi\in C^1(\bar{\Omega})$ such that $\psi \geq 0$ on $\Omega$ and $\psi$ has compact support in $\Omega$.
Moreover, we say that $u$ (and analogously $v$) is an elliptic solution to \eqref{5a} if $a_{ij}(x)=(A_i)_{\eta_j} (x,u(x),Du(x))$ satisfies the left inequality in \eqref{nc2} for all $x\in\Omega$. 
Furthermore, in this section we consider $\Omega$ with boundary $\partial \Omega$ that satisfies an {\emph{interior cone condition}} i.e. at each point $x_b\in\partial \Omega$ there exists a cone of finite height in $\Omega$ with apex $x_b$.
We denote the interior of such a cone by $\Omega_b$.

\subsection{A comparison-type tangency principle for weak elliptic solutions to quasi-linear divergence structure PDI}

\begin{thm}[Tangency Principle] \label{tfinal}
Let $x_b\in \partial \Omega$ satisfy the interior cone condition, and $u,v:\bar\Omega\to \mathbb{R}$ be such that: 
$u,v\in C^1(\bar\Omega )$; 
$u,v$ satisfy \eqref{5a} and \eqref{5b} respectively; 
$A:\Omega \times \mathbb{R}\times \mathbb{R}^n\to\mathbb{R}^n$ is continuous and continuously differentiable with respect to $z$ and $\eta$;
$A_z$ is uniformly bounded and $A_\eta$ is uniformly continuous on $\Omega_b\times [u(x_b) - M_z , u(x_b) + M_z ]\times [-M_\eta , M_\eta ]^n$ for some constants $M_z , M_\eta >0 $;
$B:\Omega \times \mathbb{R}\times \mathbb{R}^n\to\mathbb{R}$ satisfies 
\begin{equation} \label{Bcond}
B(x,z_1 , \eta_1 ) - B(x,z_2,\eta_2 ) \geq -b_z(z_1 - z_2) - b_\eta \sum_{l=1}^n|\eta_{1l}-\eta_{2l}|  
\end{equation}
for all $(x,z_1,\eta_1),(x,z_2,\eta_2)\in \Omega_b\times [u(x_b) - M_z , u(x_b) + M_z ]\times [-M_\eta , M_\eta ]^n$ with $z_1 \geq z_2$ for some constants $b_z, b_\eta \geq 0$;
$u$ is an elliptic solution of \eqref{5a} with respect to $A$ in $\Omega_b$; 
$u<v$ in $\Omega_b$;
and $u(x_b)=v(x_b)$.
Then the zero of $v-u$ at $x_b$ is of finite order.
\end{thm}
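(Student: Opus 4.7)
The plan is to set $w = v - u$ on $\overline{\Omega_b}$ --- so $w \in C^1(\overline{\Omega_b})$, $w > 0$ on $\Omega_b$, and $w(x_b) = 0$ --- and to force a polynomial lower bound $w(x_k) \geq c\,|x_k - x_b|^{\alpha}$ along a sequence $x_k \to x_b$ in $\Omega_b$, which rules out vanishing of infinite order. First I would subtract the weak form of \eqref{5a} from the weak form of \eqref{5b} and invoke the $C^1$ dependence of $A$ on $(z,\eta)$ to write
\begin{equation*}
A(x, v, Dv) - A(x, u, Du) \;=\; \tilde{a}(x)\,Dw(x) + \tilde{b}(x)\,w(x),
\end{equation*}
with averaged coefficients $\tilde{a}_{ij}(x) := \int_0^1 (A_i)_{\eta_j}(x, u+tw, Du+tDw)\,dt$ and $\tilde{b}_i(x)$ defined analogously with $(A_i)_z$. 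Together with \eqref{Bcond}, this exhibits $w$ as a non-negative weak supersolution of a linear divergence-structure PDI on $\Omega_b$ whose coefficients are bounded in terms of $\|A_z\|_\infty$, $\|A_\eta\|_\infty$, $b_z$, $b_\eta$, and the $C^1$ norms of $u$ and $v$.

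Arguing by contradiction, I would then suppose that the zero of $w$ at $x_b$ is of infinite order, so in particular $Dw(x_b) = 0$, i.e. $Du(x_b) = Dv(x_b)$. In this case $(u+tw)(x) \to u(x_b)$ and $(Du + tDw)(x) \to Du(x_b)$ uniformly in $t \in [0,1]$ as $x \to x_b$; combined with the uniform continuity of $A_\eta$ on the stated compact set and the ellipticity of $u$ with respect to $A$, this shows that $\tilde{a}(x)$ converges to the elliptic matrix $A_\eta(x_b, u(x_b), Du(x_b))$. Hence there exists a sub-cone $\Omega_b^{*} \subset \Omega_b$ with apex $x_b$ on which the linearised operator is uniformly elliptic with bounded coefficients, so that the standard weak-solution theory applies to $w$ on $\Omega_b^{*}$.

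Next I would invoke Trudinger's weak Harnack inequality \cite{NT1} for non-negative supersolutions, chained along the axis of the cone. Fix a point $y_0 \in \Omega_b^{*}$ on the axis with $w(y_0) = w_0 > 0$ and a sequence $x_k \in \Omega_b^{*}$ on the same axis with $r_k := |x_k - x_b| \to 0$. Cover the segment from $y_0$ to $x_k$ by $N_k = O(\log(1/r_k))$ overlapping balls $B_{\rho_i}(z_i)$ whose doubles remain inside $\Omega_b^{*}$, with radii $\rho_i$ comparable to the distance from $z_i$ to the lateral surface of the cone. Each weak-Harnack step transfers a pointwise positive lower bound for $w$ from one ball to its successor at the multiplicative cost of a fixed Harnack constant $C > 1$, so iterating yields
\begin{equation*}
w(x_k) \;\geq\; c\,C^{-N_k}\,w_0 \;\geq\; c'\,r_k^{\alpha},
\end{equation*}
with $\alpha > 0$ depending only on $C$ and the cone opening. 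This contradicts the supposed vanishing of infinite order, so the zero of $v-u$ at $x_b$ must be of finite order.

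The main obstacle I expect is the Harnack-chain step: Trudinger's weak Harnack controls $\inf_{B_{R}} w$ only by an $L^p$-average of $w$ on $B_{2R}$, not by the supremum, so transferring positivity along the chain requires extracting a pointwise lower bound on the successor ball from an $L^p$ bound on the predecessor ball --- a quantitative chaining lemma has to be assembled from Trudinger's inequality, with care that the accumulated constants produce only a logarithmic factor $N_k$ in the geometric series of ball radii. The linearisation is a secondary but genuine subtlety: without the contradiction hypothesis $Dw(x_b) = 0$, uniform continuity of $A_\eta$ alone does not force the averaged matrix $\tilde{a}(x)$ to inherit ellipticity from that of $u$, since the argument $Du + tDw$ need not stay close to $Du(x)$ as $x \to x_b$.
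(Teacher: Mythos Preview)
Your proposal is correct and follows essentially the same route as the paper: argue by contradiction from an assumed infinite-order zero (so $Dw(x_b)=0$), shrink to a sub-cone on which the linearised divergence operator is uniformly elliptic with bounded structure constants, and then chain Trudinger's weak Harnack inequality along a geometric sequence of balls with apex $x_b$ to force a polynomial lower bound on $w$. The paper's only stylistic differences are the use of the pointwise mean-value theorem in place of your integral-average linearisation, and an explicit nested-ball construction $B_{r_k/3}(y_k)\subset B_{2r_{k+1}/3}(y_{k+1})$ that resolves precisely the $L^p$-average-to-pointwise transfer you flag as the main obstacle.
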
 

\begin{proof}
For a contradiction, assume that $w=v-u$ has a zero of infinite order at $x_b\in\partial \Omega$.
Via regularity on $w$, it follows that $Dw(x_b)=0$.
Moreover, for each $\epsilon \in (0, \min \left\{ M_z, M_\eta /2 \right\})$, there exists a cone of finite height in $\Omega$ with apex $x_b$, without loss of generality denoted by $\Omega_b$, such that $(w(x),Dw(x))\in (0 ,\epsilon ] \times [-\epsilon , \epsilon]^n$ for all $x\in\Omega_b$, there exists a constant
\begin{equation} \label{5pb}
 a_z =\sup_{\substack{\Omega_b\times [-\epsilon , \epsilon] \times [-\epsilon , \epsilon]^n \\ i=1,\ldots ,n}}  |(A_i)_z| \in [0,\infty ),
\end{equation}
and 
\begin{equation} \label{5pf'}
\left| (A_i)_{\eta_j} (x,u(x),\eta^{(1)}) - (A_i)_{\eta_j} (x,u(x),\eta^{(2)}) \right| < \frac{1}{2n^2}
\end{equation}
for all $(x, \eta^{(1)}), (x, \eta^{(2)})\in \Omega_b\times [-2\epsilon , 2\epsilon]^n$ and $i,j=1,\ldots ,n$.
From \eqref{5a} and \eqref{5b}, we have 
\begin{align}
\nonumber		& \text{div}(A(x,v,Dv)-A(x,u,Du)) + (B(x,v,Dv) - B(x,u,Du)) \\	 
\label{5pf}	& = \text{div}(\tilde{A}(x,w,Dw)) + \tilde{B}(x,w,Dw) \leq 0 
\end{align}
on $\Omega_b$. 
The function $\tilde{A}:\Omega_b\times \mathbb{R} \times \mathbb{R}^n\to\mathbb{R}^n$ arises from repeated application of the mean value theorem in \eqref{5pf}, e.g.
\begin{equation} \label{5ph''}
\tilde{A}_i(x,z,\eta ) = (A_i)_z(x,\tilde{z}^{(i)}(x),Dv(x)) z + \sum_{j=1}^n (A_{i})_{\eta_j}(x,u(x),\tilde{\eta}^{(i)}(x)) \eta_j
\end{equation}
for all $(x,z,\eta )\in \Omega_b \times \mathbb{R} \times \mathbb{R}^n$, $\tilde{z}^{(i)}:\Omega_b\to [0,\epsilon]$ and $\tilde{\eta}^{(i)}:\Omega_b\to [-2\epsilon , 2\epsilon]^n$ for $i=1,\ldots , n$.
Similarly, via \eqref{5pf}, we define $\tilde{B}:\Omega_b\times \mathbb{R}\times \mathbb{R}^n\to\mathbb{R}$ as
\begin{align} 
\nonumber \tilde{B}(x,z,\eta )	& = \left( \frac{B(x,v(x),Dv(x)) - B(x,u(x),Dv(x))}{v(x)-u(x)}\right) z \\
\label{5ph'} 							& \ \ \ + \sum_{i=1}^{n}\left( \frac{B(x,u(x),\eta^{(i)}(x))-B(x,u(x),\eta^{(i+1)}(x))}{Dv_{n+1-i}(x)-Du_{n+1-i}(x)}\right) \eta_{n+1-i},  
\end{align}
for all 
\[ (x,z,\eta )\in \mathcal{R} = \left\{ (x,z,\eta ) \in \Omega_b \times \mathbb{R} \times \mathbb{R}^n: Du_j(x)\not= Dv_j(x) \right\} 
\]
with $\eta^{(i)}: \Omega_b\to\mathbb{R}^n$ given by 
\[ 
\eta_j^{(i)}(x)=
\begin{cases} 
Dv_j(x) , & j \leq n+1-i \\ 
Du_j(x) , & n+2-i \leq j 
\end{cases} 
\]
for $i=1,\ldots ,n+1$.
$\tilde{B}$ is defined analogously on $( \Omega_b \times \mathbb{R} \times \mathbb{R}^n) \setminus \mathcal{R}$.
Since $Q$ is elliptic with respect to $u$, it follows from \eqref{5pb}-\eqref{5ph''} that 
\begin{align}
\nonumber 			\eta^T \cdot \tilde{A}(x,z,\eta ) 	
							& 	= \sum_{i=1}^n (A_i)_z (x, \tilde{z}^{(i)}(x), Dv(x)) z \eta_i + \sum_{i,j=1}^n (A_i)_{\eta_j}(x,u(x), Du(x)) \eta_i\eta_j \\
\nonumber			&	\ \ \ + \sum_{i,j=1}^n \left( (A_i)_{\eta_j}(x,u(x), \tilde{\eta}_j^{(i)}(x)) - (A_i)_{\eta_j}(x,u(x), Du(x))\right) \eta_i\eta_j \\
\nonumber 			& \geq -na_zz|\eta| + |\eta|^2 - \frac{1}{2}|\eta|^2\\
\label{5pc}		& \geq \frac{1}{4} |\eta |^2 - (na_z)^2 z^2 
\end{align}
for $(x,z,\eta )\in \Omega_b \times [0,\infty ) \times \mathbb{R}^n$.
Additionally, via the regularity hypotheses on $A$ and $B$ it follows that there exist constants $a_\eta , b_\eta , b_z \geq 0$ such that 
\begin{equation} \label{5pd}
|\tilde{A}(x,z,\eta )| \leq a_\eta |\eta| + \sqrt{n}a_zz 
\end{equation}
for all $(x,z,\eta)\in \Omega_b \times [0,\infty ) \times \mathbb{R}^n$, and
\begin{equation} \label{5pe}
\tilde{B}(x,z,\eta) \geq - b_\eta |\eta| - b_z z 
\end{equation}
for all $(x,z,\eta)\in \Omega_b \times [0,\infty ) \times \mathbb{R}^n$.
It follows from \eqref{5pf}-\eqref{5pe} that on any $B_r(x)\subset \Omega_b$, that $w$, $\tilde{A}$ and $\tilde{B}$
\footnote{
Note that \cite[Theorem 1.2]{NT1} remains true if $u>0$ in $\Omega$, inequalities (1.2) hold on $\Omega \times [0,\infty ) \times E^n$, and the second and third inequalities in (1.2) for $\alpha = 2$ are replaced by $p\cdot A(x,u,p)\geq a_5|p|^2-a_2u^2$ and $B(x,u,p) \geq - b_1|p| - b_2u$ for constants $a_2,b_1,b_2\geq 0$ and $a_5>0$.
}
satisfy the conditions of Trudinger's weak Harnack inequality \cite[Theorem 1.2]{NT1} with constants required in the hypotheses and conclusion, independent of the ball i.e. there exists a constant $C$ independent of $B_{2r}(x)\in \Omega_b$ such that 
\begin{equation} \label{5pg}
\frac{1}{r^n} \int_{B_{2r}(x)} w \ dx \leq C \min_{B_{r}(x)} w \ \ \ \forall B_{2r}(x)\subset \Omega_b.
\end{equation} 

Now, since $x_b$ is the apex of the cone $\Omega_b\subset \Omega$, it follows that there exists a sequence of balls $\{ B_{r_k}(y_k)\}_{k\in\mathbb{N}_0}$: 
that have boundaries that tangentially intersect $\partial \Omega_b$; 
such that $B_{r_k/3}\left( y_k \right) \subset B_{2r_{k+1}/3}\left( y_{k+1} \right)$ for all $k\in \mathbb{N}_0$; 
for which $y_k\to x_b$ as $k\to\infty$; 
$r_{k+1}<r_k$ for $k\in\mathbb{N}_0$; 
and by denoting $\theta$ to be the half-angular opening of the cone, we can set
\begin{equation} \label{5ph}
\frac{r_{k+1}}{r_k} = \frac{|y_{k+1}-x_b|}{|y_k-x_b|} = \frac{1+(\tfrac{1}{3})\sin{(\theta )}}{1+(\tfrac{2}{3})\sin{(\theta )}} = \kappa \in (0,1) 
\end{equation}
for all $k\in \mathbb{N}_0$.
It follows immediately that 
\begin{equation} \label{5pi}
\min_{B_{r_{k}/3}(y_{k})} w \leq \frac{3^n}{\omega_n r_{k}^n} \int_{B_{r_{k}/3}(y_{k})} w \ dx \leq \frac{3^n}{\omega_n r_{k+1}^n} \int_{B_{2r_{k+1}/3}(y_{k+1})} w\ dx \ \ \ \forall k\in \mathbb{N}_0,
\end{equation}
with $\omega_n$ denoting the volume of a Euclidean unit ball in $\mathbb{R}^n$.
By combining \eqref{5pg} and \eqref{5pi}, we have
\begin{equation} \label{5pj}
\min_{B_{r_{k}/3}\left( y_{k} \right)} w \geq L^k \min_{B_{r_{0}/3}\left( y_{0} \right)} w \ \ \ \forall k\in \mathbb{N},
\end{equation} 
with 
\[
L=\frac{\omega_n}{C3^n}.
\]

Now, via our initial assumption, $w$ has a zero of infinite order at $x_b$ and hence via \eqref{5ph}, for each $m\in \mathbb{N}$ there exists a positive constant $c$ independent of $k$ such that 
\begin{equation} \label{5pk}
w(y_k) \leq c |y_k-x_b|^m =c |y_0-x_b|^m\kappa^{mk} \ \ \ \forall k\in \mathbb{N}_0 .
\end{equation}
Now, via \eqref{5pj} and \eqref{5pk}, it follows that there exists a positive constant $c$ independent of $k$ such that 
\begin{equation} \label{5pl}
L^k \leq c\kappa^{mk} \ \ \ \forall k\in \mathbb{N}_0 .
\end{equation}
Letting $k\to \infty$ in \eqref{5pl} implies that 
\begin{equation} \label{5pm}
\kappa^m\geq L.
\end{equation}
However, via \eqref{5ph} $\kappa^m\to 0$ as $m\to\infty$ and hence for all sufficiently large $m$, it follows that \eqref{5pm} yields a contradiction.
Therefore, the zero of $w$ at $x_b$ is of finite order, as required. 
\end{proof}

\begin{remk}
Observe that via the bounds in \eqref{5pc}-\eqref{5pe}, we have ensured that the constant $C$ in \eqref{5pg} exists independently of the choice of ball in $\Omega_b$.
Alternatively, using the conditions of \cite[Theorem 2.7.2]{PPJS1}, although bounds analogous to \eqref{5pc}-\eqref{5pe} hold on any ball in $\Omega_b$, the same constant $C$ is not necessarily valid for every ball i.e. $C$ is potentially dependent on $k$.
Consequently, in the proof of \cite[Theorem 2.7.2]{PPJS1}, although Theorem \cite[Theorem 1.2]{NT1} can be applied to any ball in $\Omega_b$, as in \eqref{5pg} and \eqref{5pj}, the constant $c$ that arises, as in \eqref{5pl}, is not necessarily independent of $k$, which is the source of the error in the proof.
\end{remk}

\begin{ex} \label{exam3}
Suppose that $\Omega\subset\mathbb{R}^n$ and for $x_b\in\partial \Omega$ satisfies an interior cone condition. 
Consider $u:\bar{\Omega}\to\mathbb{R}$ and $v:\bar{\Omega}\to\mathbb{R}$ as given in Example \ref{exam1} such that additionally, $v$ has a zero of infinite order at $x_b$, i.e.
\begin{equation} \label{exam3a}
\partial_\nu^{(m)} v (x_b) = 0 \ \ \ \forall m\in\mathbb{N}.
\end{equation}  
For the quasi-linear partial differential inequalities in \eqref{5a} and \eqref{5b} set $A:\Omega\times\mathbb{R}\times\mathbb{R}^n\to\mathbb{R}^n$ to be 
\begin{equation} \label{exam3b} 
A(x,z,\eta ) = \eta \ \ \ \forall (x,z,\eta )\in \Omega\times\mathbb{R}\times\mathbb{R}^n , 
\end{equation}
with $B:\Omega\times\mathbb{R}\times\mathbb{R}^n\to\mathbb{R}$ as in \eqref{e3}.
It follows that $A$ satisfies the conditions of Theorem \ref{tfinal}, and also, that $u$ and $v$ are elliptic solutions of \eqref{5a} and \eqref{5b} respectively. 
Via \eqref{e5} and \eqref{36'}, observe that $B$ is independent of $\eta$, and satisfies the conditions of Theorem \ref{tfinal} with the exception of the lower Lipschitz condition in \eqref{Bcond}, which instead holds locally on $\Omega \times \mathbb{R}\times\mathbb{R}^n$.  
Moreover, via \eqref{e1} and \eqref{xv2}, it follows that $u<v$ in $\Omega$. 
In conclusion, although $\Omega$, $u$, $v$, $A$ and $B$ satisfy all of the conditions of Theorem \ref{tfinal} (with the exception of the lower Lipschitz condition on $B$), via \eqref{exam3a}, 
\[ 
\partial_\nu^{(m)} u(x_b) = \partial_\nu^{(m)} v(x_b) \ \ \ \forall m\in\mathbb{N},
\]
which violates the conclusion of Theorem \ref{tfinal}.
We also note here that the conditions on $A_\eta$ and $A_z$ in Theorem \ref{tfinal} cannot be relaxed to those in \cite[Theorem 2.7.2]{PPJS1}, which can be observed via similarly constructed counter-examples.
\end{ex}

\begin{remk}
We note that the erroneous tangency principle stated in \cite[Theorem 2.7.2]{PPJS1} was intended to be a relaxation of that in \cite{JS1} to allow for weaker constraints on the nonlinearities $A$ and $B$ in \eqref{5a} and \eqref{5b} as $x\to \partial \Omega$.
However, the constraint on $B$, appears to arise from the very same condition on $B$ in the erroneous BPL stated in \cite[Theorem 2.7.1]{PPJS1}.
\end{remk}

To conclude the section, we note that in \cite[Theorem 2.7.3]{PPJS1}, a strong maximum principle and tangency principle is stated with the regularity conditions on $u$ and $v$ in \cite[Theorem 2.7.2]{PPJS1} relaxed to $u,v\in C(\bar{\Omega})$ but so that $u$ and $v$ also possess strong derivatives in $L_{\textrm{loc}}^2(\Omega)$.
To compensate for these relaxed regularity conditions on $u$ and $v$, stricter regularity conditions are imposed on $A$ and $B$ which we now demonstrate, are insufficient to establish the conclusion.
This establishes that all three theorems in \cite[Section 2.7]{PPJS1} are erroneous.

\begin{ex} \label{ex4}
For $\epsilon \in (0,1)$ consider $\Omega = B_1(0)\setminus \overline{B_{1-\epsilon}(0)}$ with $u$, $A=A(\eta )$ and $B=B(x,z)$ as in Example \ref{exam3}.
Here consider $v$ given by 
\begin{equation} \label{ex4a}
v(x)=
\begin{cases} 
e^{1/(1-|x|^2)},		& x\in B_1(0)\setminus B_{1-\epsilon}(0) \\
0, 							& x\in \partial B_1(0) .
\end{cases}
\end{equation}
Observe that $u,v\in C^\infty(\bar{\Omega})$ and that the zero of $v-u$ on $\partial B_1(0)$ is of infinite order.
Additionally, note that $A$ is locally bounded on $\mathbb{R}^n$ and $B$ is locally bounded and locally lower Lipschitz on $\Omega \times \mathbb{R}$.
Furthermore, for $i=1,\ldots ,n$, we have
\begin{equation} \label{ex4b}
v_{x_ix_i}(x) = \frac{\left( 4x_i^2 - 8x_i^2(1-|x|^2) - 2(1-|x|^2)^2 \right)v(x) }{(1-|x|^2)^4}  \ \ \ \forall x\in \Omega .
\end{equation}
Via \eqref{ex4a} and \eqref{ex4b}, for sufficiently small $\epsilon >0$, it follows that
\[ 
\sum_{i=1}^n v_{x_ix_i} > 0 \ \ \ \text{ on }\Omega .
\]
For such $\epsilon >0$, it follows that $B(x,z)$, as given by \eqref{e3}, is non-increasing in $z$ on $\Omega$. 
Therefore, although $\Omega$, $u$, $v$, $A$ and $B$ satisfy the conditions of \cite[Theorem 2.7.3]{PPJS1}, the conclusion that the zero of $v-u$ on $\partial B_1(0)$ is of finite order is violated.
\end{ex}

\section{Conclusion}

In Theorem \ref{CSMP}, the outward ball condition on $\mathcal{S}$ in Definition \ref{D1} can be generalised to an outward $C^{1,Dini}$ condition, provided that the conditions on the coefficients of $L$ are appropriately constrained.
This can be achieved with more restrictive conditions in the statement of Theorem \ref{CSMP}, by replacing the function in Lemma \ref{lem1} with a suitable alternative (for instance, the regularized distance functions constructed in \cite[Sections 1 and 2]{GL1}).

In relation to Theorem \ref{t2}, a fully nonlinear version can be established without substantial additional technicality (see, for example \cite{EH1} or \cite{PPJS1}). 
Moreover, the condition bounding $v_{x_ix_j}$ can be relaxed provided that the right hand side of \eqref{a}-\eqref{d} can be expressed (for instance, by further constraining the growth of $\Lambda (d)$ as $d\to 0$) so that Theorem \ref{BPL1} can be applied. 

With regard to Theorem \ref{tfinal}, we note that allowable blow-up in $A$ and $B$ as $x\to x_b$ can be accommodated by using the more general integrability conditions on coefficients in Theorem \cite[Theorem 1.2]{NT1} i.e. by using Theorem \cite[Theorem 5.1]{NT1}.
Also, complementary results are contained in \cite{LR2}, \cite{LR1} and \cite{SDL1} where BPL for quasi-linear elliptic PDI are established under more regular domain and PDI constraints, but which guarantee the existence of non-zero (first) outward directional derivatives. 
It is also pertinent to note that in \cite{SDL1} the author highlights two further distinct incorrect statements of BPP from those highlighted here and in \cite{DNJM1}.

\section*{Acknowledgements}
The author would like to thank numerous colleagues in the School of Mathematics at the University of Birmingham, past and present, who provided helpful comments in relation to the preparation of this manuscript.


\begin{thebibliography}{ww}
\bibitem{ABMMZ1} R. Alvarado, D. Brigham, V. Maz'ya, M. Mitrea and E. Ziad\'e, `On the regularity of domains satisfying a uniform hour-glass condition and a sharp version of the Hopf-Oleinik boundary point principle.', \emph{J. Math Sci}, \textbf{176}, 3, (2011), 281-360, \href{https://doi.org/10.1007/s10958-011-0398-3}{10.1007/s10958-011-0398-3}
\bibitem{AN1} D. E. Apushkinskaya and A. I. Nazarov, `A counterexample to the Hopf-Oleinik lemma (elliptic case)' \emph{Anal. \& PDE}, \textbf{9}, 2, (2016), 439-458, \href{http://dx.doi.org/10.2140/apde.2016.9.439}{10.2140/apde.2016.9.439}
\bibitem{EH1} E. Hopf, `Element\"{a}re Bemerkungen \"{u}ber die L\"{o}sungen partieller Differentialgleichungen zweiter Ordnung vom elliptischen Typus', Siszungsberichte Preussische Akademie der Wissenschaften, Berlin, (1927), 147-152.
\bibitem{GL1} G. M. Lieberman, `Regularized distance and its applications.' Pacific J. Math., \textbf{117}, 2, (1985), 329-352, \href{https://doi.org/10.2140/pjm.1985.117.329}{10.2140/pjm.1985.117.329} 
\bibitem{AM1} A. McNabb, `Strong comparison theorems for elliptic equations of second order.' J. Math. Mech, \textbf{10}, (1961), 431-440.
\bibitem{JM1} J. Moser, `A Harnack inequality for elliptic differential equations.' \emph{Comm. Pure Appl. Math.}, \textbf{14}, (1961), 577-591, \href{https://doi.org/10.1002/cpa.3160140329}{10.1002/cpa.3160140329}
\bibitem{AIN1} A. I. Nazarov, `A centennial on the Zaremba-Hopf-Oleinik lemma', \emph{SIAM J. MATH. ANAL.}, \textbf{14}, 1, (2012), 437-453, \href{https://doi.org/10.1137/110821664}{10.1137/110821664}
\bibitem{DNJM1} D. J. Needham and J. C. Meyer, `A note on the classical weak and strong maximum principles for linear parabolic partial differential inequalities.' ZAMP, \textbf{66}, 4, (2015), 2081-2086, \href{https://doi.org/10.1007/s00033-014-0492-8}{10.1007/s00033-014-0492-8}
\bibitem{PW1} M. H. Protter and H. F. Weinberger, \emph{Maximum Principles in Differential Equations.} (Springer-Verlag, 1984, New York).
\bibitem{PPJS2} P. Pucci and J. Serrin, `The strong maximum principle revisited.' \emph{J. Differential Equations}, \textbf{196}, 1, (2004), 1-66, \href{https://doi.org/10.1016/j.jde.2003.05.001}{10.1016/j.jde.2003.05.001} 
\bibitem{PPJS1} P. Pucci and J. Serrin, \emph{The Maximum Principle} (Birkh\"{a}user, 2007, Basel).
\bibitem{PPVR1} P. Pucci and V. D. R\u{a}dulescu, `A maximum principle with a lack of monotonicity.' \emph{Electronic Journal of Qualitative Theory of differential Equations.}, 58, (2018), 1–11, \href{https://doi.org/10.14232/ejqtde.2018.1.58}{10.14232/ejqtde.2018.1.58}
\bibitem{LR2} L. Rosales, `Generalizing Hopf's boundary point lemma.' \emph{Canadian Math. Bull.}, \textbf{62}, 1, (2019), 183-197, \href{https://doi.org/10.4153/CMB-2017-074-6}{10.4153/CMB-2017-074-6}
\bibitem{LR1} L. Rosales, `A Hopf-type boundary point lemma for pairs of solutions to quasilinear equations.' \emph{Canadian Math. Bull.}, \textbf{62}, 3, (2019), 607-621, \href{https://doi.org/10.4153/S0008439519000055}{10.4153/S0008439519000055}
\bibitem{SDL1} J. C. Sabina De Lis, `Hopf maximum principle revisited.' \emph{Electronic J. Differential Equations}, \textbf{2015}, 115, (2015), 1-9.
\bibitem{JS1} J. Serrin, `On the strong maximum principle for quasilinear second order differential inequalities.' \emph{J. Functional Analysis}, \textbf{5}, (1970), 184-193, \href{https://doi.org/10.1016/0022-1236(70)90024-8}{10.1016/0022-1236(70)90024-8}
\bibitem{NT1} N. S. Trudinger, `On Harnack type inequalities and their application to quasilinear elliptic equations.' \emph{Comm. Pure. Appl Math.}, \textbf{20}, (1967), 721-747, \href{https://doi.org/10.1002/cpa.3160200406}{10.1002/cpa.3160200406}
\bibitem{Z2} L. Zaj\'{i}\v{c}ek, `Porosity and $\sigma-$porosity.' \emph{Real Analysis Exchange}, \textbf{13}, 2, (1987), 314-350.
\bibitem{Z1} L. Zaj\'{i}\v{c}ek, `On $\sigma$-porous sets in abstract spaces.' \emph{Abstr. Appl. Anal.}, \textbf{2005}, 5, (2005), 509-534, \href{https://doi.org/10.1155/AAA.2005.509}{10.1155/AAA.2005.509}
\end{thebibliography}
\end{document}